\newcommand*{\qedh}{\hfill\ensuremath{\square}}%
\DeclareSymbolFontAlphabet{\amsmathbb}{AMSb}
\newcommand{\reals}{\ensuremath{\amsmathbb{R}}}
\newcommand{\naturals}{\ensuremath{\amsmathbb{N}}}
\newcommand{\onevec}{\ensuremath{\mathbf{1}}}
\newcommand{\argmin}{\mathop{\arg\,\min}}
\newcommand{\trace}{\mathop{\mathsf{tr}}}
\newcommand{\Symmetric}{\amsmathbb{S}}
\newcommand{\Permutations}{\mathbb{P}}
\newcommand{\Ortho}{\mathbb{O}}
\newcommand{\DStoch}{\amsmathbb{W}}
\newcommand{\conv}{\mathtt{conv}}
\newcommand{\embed}{\psi}
\newcommand{\InnerDSLtwo}{\textbf{InnerDSL2}\xspace}
\newcommand{\NetAlignBP}{\textbf{NetAlignBP}\xspace}
\newcommand{\SparseIsoRank}{\textbf{SparseIsoRank}\xspace}
\newcommand{\IsoRank}{\textbf{IsoRank}\xspace}
\newcommand{\NetAlignMR}{\textbf{NetAlignMR}\xspace}
\newcommand{\Natalie}{\textbf{Natalie}\xspace}
\newcommand{\DSLone}{\textbf{DSL1}\xspace}
\newcommand{\DSLtwo}{\textbf{DSL2}\xspace}
\newcommand{\InnerPerm}{\textbf{InnerPerm}\xspace}
\newcommand{\InnerDSLone}{\textbf{InnerDSL1}\xspace}
\newcommand{\EXACT}{\textbf{EXACT}\xspace}
\newtheorem{lemma}{Lemma}
\newtheorem{theorem}{Theorem}
\newtheorem{rmk}{Remark}
\newcommand{\alt}[2]{#1}
\renewcommand{\alt}[2]{#2}
\newcounter{packednmbr}
\newenvironment{packeditemize}{\begin{list}{$\bullet$}{\setlength{\itemsep}{0pt}\addtolength{\labelwidth}{-5pt}\setlength{\leftmargin}{\labelwidth}\setlength{\listparindent}{\parindent}\setlength{\parsep}{0pt}\setlength{\topsep}{3pt}}}{\end{list}}
\begin{document}

\title{\Large A Family of Tractable Graph Distances}

\author{
  Jose Bento\thanks{Boston College, \texttt{jose.bento@bc.edu}}     \and
  Stratis Ioannidis\thanks{Northeastern University, \texttt{ioannidis@ece.neu.edu} } 
     }

\date{}

\maketitle

\begin{abstract}
Important data mining problems such as nearest-neighbor search and clustering admit theoretical guarantees when restricted to  objects  embedded in a metric space. Graphs are ubiquitous, and clustering and classification over graphs arise in diverse areas, including, e.g., image processing and social networks. Unfortunately, popular distance scores used in these applications, that scale over large graphs, are not metrics and thus come with no guarantees.   Classic graph distances such as, e.g., the chemical  and the CKS distance are arguably natural and intuitive, and are indeed also metrics, but they are  intractable: as such, their computation does not scale to large graphs. 
We define a broad family of graph distances, that includes both the chemical and the CKS distance, and prove that these are all metrics. Crucially, we show that our family  includes metrics that are tractable. Moreover, we extend these distances by incorporating auxiliary node attributes, which is important in practice, while maintaining both the metric property and  tractability.
\end{abstract}

\section{Introduction}
\label{sec:intro}
Graph similarity and the related problem of  graph isomorphism have a long history in data mining, machine learning, and pattern recognition \citep{conte2004thirty,macindoe2010graph,koutra2013deltacon}.  \emph{Graph distances} naturally arise in this literature: intuitively, given two (unlabeled) graphs, their distance is a score quanitifying their structural differences.
A highly desirable property for such a score is that it is a \emph{metric}, i.e., it is non-negative, symmetric, positive-definite, and, crucially, satisfies the triangle inequality. Metrics exhibit significant computational advantages over non-metrics. For example, operations such as nearest-neighbor search \citep{clarkson2006nearest,clarkson1999nearest,beygelzimer2006cover},  clustering \citep{ackermann2010clustering}, outlier detection \citep{angiulli2002fast}, and diameter computation \citep{indyk1999sublinear} admit fast algorithms precisely when performed over objects embedded in a metric space.  To this end, proposing \emph{tractable}  graph metrics is of paramount importance in applying such algorithms to graphs.

Unfortunately, graph metrics of interest are often computationally expensive.
A well-known example is the \emph{chemical distance} \citep{kvasnivcka1991reaction}. Formally, given graphs $G_A$ and $G_B$, represented by their adjacency matrices $A,B\in\{0,1\}^{n\times n}$, the  chemical distance is $d_{\Permutations^n} (A,B)$ is defined in terms of  a mapping between the two graphs that minimizes their edge discrepancies, i.e.:\begin{align}d_{\Permutations^n} (A,B) =\textstyle \min_{P\in\Permutations^n} \|AP-PB\|_F,\label{chemical}\end{align}
where $\Permutations^n$ is the set of permutation matrices of size $n$ and $\|\cdot\|_F,$ is the Frobenius norm (see Sec.~\ref{sec:technical} for definitions). The \emph{Chartrand-Kubiki-Shultz (CKS)} \citep{chartrand1998graph} distance is an alternative: CKS is again given by \eqref{chemical} but, instead of edges, matrices $A$ and $B$ contain the pairwise shortest path distances between any two nodes. 
The chemical and CKS distances have important properties. First, they are zero if and only if the graphs are isomorphic, which  appeals to both intuition and practice; second, as desired, they are metrics; third, they have a natural interpretation,  capturing global 
  structural similarities between graphs. However, finding an optimal permutation $P$ is notoriously hard; graph isomorphism, which is equivalent to deciding if there exists a permutation $P$ s.t.~$AP=PB$ (for both adjacency and path matrices), is famously a problem that is neither known to be in P nor shown to be NP-hard \citep{babai2016graph}.  
There is a large and expanding literature on scalable heuristics to estimate the optimal permutation $P$ \citep{klau2009new,bayati2009algorithms,lyzinski2016graph,el2015natalie}. Despite their computational advantages, unfortunately, using them to approximate $d_{\Permutations^n} (A,B)$ breaks the metric property.

This significantly degrades the performance of many important tasks that rely on computing distances
between graphs. For example, there is a clear separation on the approximability of clustering over metric and non-metric spaces \cite{ackermann2010clustering}. We also demonstrate this empirically in Section~\ref{sec:experiments} (c.f.~Fig.~\ref{fig:clustering_exp}): attempting to cluster graphs sampled from well-known families based on non-metric distances significantly increases the misclassification rate, compared to clustering using metrics. 

An additonal issue that arises in practice is that nodes often have attributes not associated with adjacency.  For example, in social networks, nodes may contain profiles with a user's age or gender; similarly, nodes in molecules may be labeled by atomic numbers. Such attributes are not captured by the chemical or CKS distances. However, in such cases, 
only \emph{label-preserving} permutations $P$ may make sense (e.g., mapping females to females,  oxygens to oxygens, etc.). Incorporating attributes  while preserving the metric property is thus important from a practical perspective.

\noindent\textbf{Contributions.} We seek  generalization of the chemical and CKS distances that (a) \emph{satisfy the metric property} and (b) are \emph{tractable}: by this, we mean that they can be computed either by solving a convex optimization problem, or by a polynomial time algorithm. Specifically, we study generalizations of \eqref{chemical} of the form:
\begin{align}d_S(A,B)=\textstyle \min_{P\in S}\|AP-PB\|\label{general} \end{align}
where $S \subset \reals^{n\times n}$ is closed and bounded, $\|\cdot\|$ is a matrix norm, and $A,B\in \reals^{n\times n}$ are arbitrary real matrices (representing adjacency, path distances, weights, etc.). We make the following contributions:
\begin{packeditemize}
\item We prove sufficient conditions on $S$ and norm $\|\cdot\|$ for which \eqref{general} is a metric.
In particular, we show that $d_S$ is a so-called \emph{pseudo-metric} (see Sec.~\ref{sec:technical}) when:
\begin{packeditemize}
 \item[{\em (i)}] $S=\Permutations^n$ and $\|\cdot\|$ is any entry-wise or operator norm;
\item[{\em (ii)}] $S=\DStoch^n$, the set of \emph{doubly stochastic} matrices, $\|\cdot\|$ is an arbitrary entry-wise norm, and $A,B$ are symmetric; a modification on $d_S$  extends this result to both operator norms as well as arbitrary matrices (capturing, e.g., directed graphs); and
\item[{\em (iii)}] $S=\Ortho^n$, the set of orthogonal matrices, and $\|\cdot\|$ is the operator or entry-wise 2-norm.
\end{packeditemize} Relaxations \emph{(ii)} and \emph{(iii)} are very important from a practical standpoint. For all matrix norms, computing \eqref{general} with $S=\DStoch^n$ is tractable, as it is a convex optimization. For $S=\Ortho^n$, \eqref{general} is non-convex but is still tractable, as it reduces to a spectral decomposition. This was known for the Frobenius norm \cite{umeyama1988eigendecomposition};  we prove this is the case for the operator 2-norm also.
\item We include node attributes in a natural way in the definition of $d_S$ as both \emph{soft} (i.e., penalties in the objective) or \emph{hard} constraints in Eq.~\eqref{general}. Crucially, we do this \emph{without affecting the metric property and tractability}. This allows us to explore label or feature preserving permutations, that incorporate both (a) exogenous node attributes, such as, e.g., user age or gender in a social network,  as well as (b)  endogenous, structural features of each node, such as its degree or the number of triangles that pass through it. We numerically show that adding these constraints can speed up the computation of $d_S$.
\end{packeditemize} 

From an experimental standpoint, we extensively compare our tractable metrics to several existing heuristic approximations. We also demonstrate the tractability of our metrics
by parallelizing their execution using the alternating method of multipliers \citep{boyd2011distributed}, which we implement over a compute cluster using Apache Spark \citep{zaharia2010spark}.

\noindent\textbf{Related Work.} Graph distance (or similarity) scores find applications in varied fields such as in image processing \citep{conte2004thirty}, chemistry~\citep{allen2002cambridge,kvasnivcka1991reaction}, and social network analysis \citep{macindoe2010graph,koutra2013deltacon}.
 Graph distances are easy to define when, contrary to our setting, the correspondence between graph nodes is known, i.e., graphs are \emph{labeled} \citep{papadimitriou2010web,koutra2013deltacon,soundarajan2014guide}.  Beyond the chemical distance, classic examples of distances between unlabeled graphs are the \emph{edit distance} \citep{garey2002computers,sanfeliu1983distance} and the \emph{maximum common subgraph distance} \citep{bunke1998graph,bunke1997relation}, both of which also have versions for labeled graphs. Both are metrics and are hard to compute, while existing heuristics \citep{riesen2009approximate,fankhauser2011speeding} are not metrics. 
The  \emph{reaction distance}~\citep{koca2012synthon} is also a metric  directly related to the chemical distance \citep{kvasnivcka1991reaction} when edits are restricted to edge additions and deletions. Jain \cite{jain2016geometry} also considers an extension of the chemical distance, limited to the Frobenius norm, that incorporates edge attributes. However, it is not immediately clear how to relax the above metrics \cite{jain2016geometry,koca2012synthon} to attain tractability. 

A metric  can also be induced by embedding graphs in a metric space and measuring the distance of these embeddings \citep{riesen2007graph,ferrer2010generalized,riesen2010graph}. Several works follow such an approach, mapping graphs, e.g., to spaces determined by their spectral decomposition \citep{zhu2005study,wilson2008study,elghawalby2008measuring}.  In general, in contrast to our metrics, such approaches are not as discriminative, as embeddings summarize  graph structure. Continuous relaxations of graph isomorphism, both convex and non-convex \citep{lyzinski2016graph,aflalo2015convex,umeyama1988eigendecomposition}, have found applications in a variety of contexts, including social networks \citep{koutra2013big}, computer vision \citep{schellewald2001evaluation}, shape detection \citep{sebastian2004recognition,he2006object}, and neuroscience \citep{vogelstein2011large}. None of the above works focus on metric properties of resulting relaxations, which several fail to satisfy \citep{vogelstein2011large,koutra2013big,sebastian2004recognition,he2006object}.
 
Metrics naturally arise in  data mining tasks, including clustering \citep{xing2002distance,hartigan1975clustering}, NN search \citep{clarkson2006nearest,clarkson1999nearest,beygelzimer2006cover}, and outlier detection  \citep{angiulli2002fast}. Some of these tasks become tractable or admit formal guarantees precisely when performed over a metric space. For example, finding the nearest neighbor  \citep{clarkson2006nearest,clarkson1999nearest,beygelzimer2006cover} or the diameter of a dataset \citep{indyk1999sublinear} become polylogarithimic under metric assumptions; similarly, approximation algorithms for clustering (which is NP-hard) rely on metric assumptions, whose absence leads to a deterioration on known bounds \citep{ackermann2010clustering}. Our search for metrics is motivated by these considerations.

\section{Notation and Preliminaries}
\label{sec:technical}
\noindent\textbf{Graphs.} We represent an undirected graph $G(V,E)$ with node set $V=[n] \equiv\{1,\ldots,n\} $ and  edge set $E\subseteq [n]\times[n]$  by its \emph{adjacency matrix}, i.e. $A=[a_{i,j}]_{i,j \in [n]} \in \{0,1\}^{n \times n}$ s.t. $a_{ij} = a_{ji} = 1$ if and only if $(i,j)\in E.$  
In particular, $A$ is symmetric, i.e. $A=A^\top$. We denote the set of all real, symmetric matrices by
$\Symmetric^n$.
\emph{Directed} graphs are represented by (possibly non-symmetric) binary matrices $A\in \{0,1\}^{n\times n}$, and \emph{weighted} graphs by real matrices $A\in\reals^{n\times n}$.

\noindent\textbf{Matrix Norms.} Given a matrix $A=[a_{ij}]_{i,j\in [n]}\in \reals^{n\times n}$ and a $p\in \naturals_+\cup\{\infty\}$, its \emph{induced} or \emph{operator $p$-norm}  is defined in terms of the vector $p$-norm through
$ \|A\|_p = \sup_{x\in\reals^n:\|x\|_p=1}{\|A x\|_p}, $
while its \emph{entry-wise $p$-norm} is given by
$ \|A\|_p = (\sum_{i=1}^n\sum_{j=1}^n|a_{ij}|^{p})^{1/p}, $
for $p\in \naturals_+$, and $\|A\|_\infty=\max_{i,j}|a_{i,j}|$. 
We denote the entry-wise $2$-norm (i.e., the  \emph{Frobenius} norm) as $\|\cdot\|_F$.

\noindent\textbf{Permutation, Doubly Stochastic, and Orthogonal Matrices.} We denote the set of \emph{permutation} matrices
 as
$\Permutations^n = \{P\in \{0,1\}^{n\times n}: P \onevec = \onevec, P^{\top} \onevec = \onevec\},$ 
the set of \emph{doubly-stochastic} matrices (i.e.,~the \emph{Birkhoff polytope}) as
$\DStoch^n = \{W\in [0,1]^{n\times n}: W \onevec = \onevec, W^{\top} \onevec = \onevec\}, $ 
and the set of \emph{orthogonal matrices} (i.e.,~the \emph{Stiefel manifold}) as
$\Ortho^n = \{U\in \reals^{n\times n}: UU^\top =U^\top U = I\}. $
Note that $\Permutations^n =\DStoch^n\cap \Ortho^n$. Moreover, the Birkoff-von Neumann Theorem \citep{birkhoff1946three} states that
$\DStoch^n=\conv(\Permutations^n),$  i.e., the Birkoff polytope 
 is the convex hull of $\Permutations^n$.

\noindent\textbf{Metrics.} Given a set $\Omega$, a function $d:\Omega\times\Omega \to \reals$ is called a \emph{metric}, and the pair $(\Omega,d)$ is called a \emph{metric space}, if  for all $x,y,z\in\Omega$:
\begin{subequations}
\begin{align}
d(x,y) &\geq 0 &  &\text{(non-negativity)} \label{non-neg}\\
d(x,y) &\!=\! 0 \text{ iff } x\!=\!y  & &\text{(pos.~definiteness)} \label{posdef}\\
d(x,y) &= d(y,x) &   &\text{(symmetry)} \label{symmetry}\\
d(x,y) &\!\leq\! d(x,z) \!+\!d(z,y) & & \text{(triangle~inequality)} \label{triangle}
\end{align}
A function $d$ is called a \emph{pseudometric} if it satisfies \eqref{non-neg}, \eqref{symmetry}, and \eqref{triangle}, but the positive definiteness  property \eqref{posdef} is replaced by the  (weaker) property:
\begin{align}
d(x,x) & = 0 \text{ for all }x\in \Omega.\label{weakdef}
\end{align}
\end{subequations}
 If $d$ is a pseudometric, then 
   $d(x,y)=0$ defines an equivalence relation $x \sim_d y$ over $\Omega$. A pseudometric is then a metric over $\Omega/\!\sim_d$, the quotient space of $\sim_d$.
A $d$ that satisfies \eqref{non-neg}, \eqref{posdef}, and
 \eqref{triangle} \emph{but not} the symmetry property \eqref{symmetry} is called a \emph{quasimetric}. If $d$ is a quasimetric, then its \emph{symmetric extension} $\bar{d}:\Omega\times\Omega\to \reals$, defined as
$\bar{d}(x,y) = d(x,y)+d(y,x),$
is a  metric over $\Omega.$

\noindent\textbf{Graph Isomorphism,  Chemical, and CKS Distance.}
Let $A,B\in \reals^{n\times n}$ be the adjacency matrices of two graphs $G_A$ and $G_B$. Then, $G_A$ and $G_B$ are \emph{isomorphic} if and only if there exists $P\in \Permutations^n$ s.t.
$P^\top A P =B$ or, equivalently, $AP=PB$. The \emph{chemical distance}, given by \eqref{chemical},
extends the latter relationship to capture distances between graphs.
Let $\|\cdot\|$ be a matrix norm in $\reals^{n\times n}$. 
 For some $\Omega \subseteq \reals^{n\times n}$, define $d_S:\Omega\times\Omega \to \reals_+$ as:
\begin{align}d_S(A,B) = \textstyle\min_{P\in S} \|AP-PB\| ,\label{minnorm}\end{align}
where $S\subset \reals^{n\times n}$ is a closed and bounded set, so that the infimum is indeed attained. 
Note that $d_S$ is the chemical distance \eqref{chemical} when $\Omega=\reals^{n\times n}$, $S=\Permutations^n$ and $\|\cdot\|=\|\cdot\|_F$. In CKS distance \cite{chartrand1998graph}, matrices $A,B$ contain pairwise path distances between any two nodes; equivalently, CKS is the chemical distance of two weighted complete graphs with path distances as edge weights.   Our main contribution is determining general conditions on $S$ and $\|\cdot\|$ under which $d_S$ is a metric over $\Omega$, for arbitrary weighted graphs, thereby including both the chemical and CKS distances as  special cases. 
For concreteness, we focus on distances between graphs of equal size. Extensions to graphs of unequal size are described \alt{in \citep{arxivthis}.}{in Appendix~\ref{app:size}.}

\section{A Family of Graph Metrics}
\label{sec:metric}
Our first result establishes that $d_{\Permutations^n}$ is a pseudometric  over \emph{all} weighted graphs when $\|\cdot\|$ is an \emph{arbitrary}  entry-wise or operator norm.
\begin{theorem}\label{thm:permutation} If $S=\Permutations^n$ and $\|\cdot\|$ is an arbitrary entry-wise or operator norm, then $d_S$ given by \eqref{minnorm} is a pseudometric over $\Omega=\reals^{n\times n}$.
\end{theorem}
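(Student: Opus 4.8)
The plan is to verify the three pseudometric axioms \eqref{non-neg}, \eqref{symmetry}, \eqref{weakdef}, plus the triangle inequality \eqref{triangle}, exploiting two structural facts about $\Permutations^n$: every $P\in\Permutations^n$ is invertible with $P^{-1}=P^\top\in\Permutations^n$, and every entry-wise or operator norm is \emph{submultiplicative} up to the relevant normalization and, crucially, \emph{permutation-invariant}, i.e. $\|PM\|=\|MP\|=\|M\|$ for all $P\in\Permutations^n$ (permuting rows or columns just relabels entries, and for operator norms conjugation by an orthogonal matrix preserves the norm). Non-negativity \eqref{non-neg} is immediate since $\|\cdot\|\ge 0$, and \eqref{weakdef} follows by taking $P=I$: $d_{\Permutations^n}(A,A)\le\|AI-IA\|=0$.

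For symmetry \eqref{symmetry}, I would start from $d_{\Permutations^n}(A,B)=\|AP^\star-P^\star B\|$ for an optimal $P^\star$, and observe that $AP^\star-P^\star B = -(P^\star B - AP^\star) = -P^\star(B(P^\star)^\top - (P^\star)^\top A)P^\star$ after left- and right-multiplying by $(P^\star)^\top$ and $P^\star$ appropriately; using permutation-invariance of the norm and the fact that $\|-M\|=\|M\|$, this equals $\|B(P^\star)^\top-(P^\star)^\top A\| = \|BQ-QA\|$ with $Q=(P^\star)^\top\in\Permutations^n$, so $d_{\Permutations^n}(B,A)\le d_{\Permutations^n}(A,B)$; the reverse inequality is symmetric, giving equality.

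The triangle inequality \eqref{triangle} is the heart of the argument. Given $A,B,C$, let $P$ be optimal for the pair $(A,C)$ and $Q$ optimal for $(C,B)$, so $d_{\Permutations^n}(A,C)=\|AP-PC\|$ and $d_{\Permutations^n}(C,B)=\|CQ-QB\|$. The natural candidate permutation for $(A,B)$ is the product $PQ\in\Permutations^n$ (closure of $\Permutations^n$ under multiplication). I would then write the telescoping identity
\begin{align}
A(PQ)-(PQ)B = (AP-PC)Q + P(CQ-QB),
\end{align}
apply the triangle inequality of $\|\cdot\|$, and then use permutation-invariance: $\|(AP-PC)Q\|=\|AP-PC\|$ and $\|P(CQ-QB)\|=\|CQ-QB\|$. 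This yields $d_{\Permutations^n}(A,B)\le\|A(PQ)-(PQ)B\|\le d_{\Permutations^n}(A,C)+d_{\Permutations^n}(C,B)$.

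The main obstacle — really the only nontrivial point — is establishing the permutation-invariance $\|PM\|=\|MP\|=\|M\|$ uniformly across all entry-wise $p$-norms (including $p=\infty$) and all operator $p$-norms. For entry-wise norms this is a bijective relabeling of the summands (or of the max), hence clear. For operator norms one argues that $\|PMQ\|_p=\sup_{\|x\|_p=1}\|PMQx\|_p$ and that $x\mapsto Qx$ is a bijection of the unit $p$-sphere (since permutations preserve every vector $p$-norm) while $\|Py\|_p=\|y\|_p$; I would state this as a short lemma and reference it at the two places above. Everything else is routine, but I would be careful to note that submultiplicativity per se is not needed — permutation-invariance does all the work — and that the same proof goes through verbatim for any $S$ that is a closed, bounded subgroup of the invertible matrices on which $\|\cdot\|$ is invariant, which foreshadows the later $\Ortho^n$ result.
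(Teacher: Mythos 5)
Your proposal is correct and takes essentially the same route as the paper: the telescoping identity $A(PQ)-(PQ)B = (AP-PC)Q + P(CQ-QB)$ for the triangle inequality and the conjugation-by-$P$ argument for symmetry are precisely the paper's Lemmas~\ref{trianglelemma} and~\ref{symmetry2} specialized to $\Permutations^n$, combined with the norm-preservation fact of Lemma~\ref{contractive_permutations}. The one (intentional) difference is that the paper works with the weaker \emph{contraction} hypothesis $\|PM\|\le\|M\|$ rather than your exact invariance $\|PM\|=\|M\|$, because the contraction form is what lets the very same lemmas carry over to $\DStoch^n$ in Theorem~\ref{thm:dstochastic}; for $\Permutations^n$ alone, either suffices.
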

Hence, $d_{\Permutations^n}$ is a pseudometric under any entry-wise or operator norm over arbitrary directed, weighted graphs.
Our second result states that this property extends to the \emph{relaxed} version of the chemical distance, in which permutations are replaced by doubly stochastic matrices.
\begin{theorem}\label{thm:dstochastic}\vspace*{-3mm}
If $S=\DStoch^n$ and $\|\cdot\|$ is an arbitrary entry-wise norm, then $d_S$ given by \eqref{minnorm} is a pseudometric over $\Omega=\Symmetric^{n\times n}$. If $\|\cdot\|$ is an arbitrary entry-wise or operator norm, then its symmetric extension 
$\bar{d}_S(A,B) = d_S(A,B) +d_S(B,A)$
is a pseudometric over $\Omega=\reals^{n\times n}$.
\end{theorem}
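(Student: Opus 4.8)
The plan is to verify the four pseudometric axioms \eqref{non-neg}, \eqref{weakdef}, \eqref{symmetry}, \eqref{triangle} in turn, reusing the structure of the (permutation) argument behind Theorem~\ref{thm:permutation} wherever possible. Non-negativity \eqref{non-neg} is immediate because $\|\cdot\|$ is a norm, and $d_S(A,A)=0$ follows by taking $W=I\in\DStoch^n$, which gives $\|AW-WA\|=0$. The two genuinely new points are the triangle inequality (for permutations this uses closure of $\Permutations^n$ under products together with the fact that left/right multiplication by a permutation is an \emph{isometry} for every entry-wise and operator norm) and symmetry (for permutations this uses that $P\mapsto P^{\top}=P^{-1}$ maps $\Permutations^n$ to itself).

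For the triangle inequality I would first record the key observation: multiplication by a doubly stochastic matrix is \emph{non-expansive} in every entry-wise and operator norm. Indeed, by the Birkhoff--von Neumann theorem $W=\sum_i\lambda_iP_i$ with $P_i\in\Permutations^n$, $\lambda_i\ge 0$, $\sum_i\lambda_i=1$, so for any $M$ we get $\|MW\|\le\sum_i\lambda_i\|MP_i\|=\|M\|$ and likewise $\|WM\|\le\|M\|$, using that right/left multiplication by a permutation merely permutes columns/rows in the entry-wise case, and preserves the $p$-norm of the argument of the supremum in the operator case. Then, by the same product construction used for permutations, if $W$ attains $d_S(A,B)$ and $V$ attains $d_S(B,C)$, the matrix $WV\in\DStoch^n$ (products of doubly stochastic matrices are doubly stochastic) satisfies
\begin{align*}
d_S(A,C)&\le\|A(WV)-(WV)C\|\\
&\le\|(AW-WB)V\|+\|W(BV-VC)\|\\
&\le\|AW-WB\|+\|BV-VC\|=d_S(A,B)+d_S(B,C),
\end{align*}
where the middle step adds and subtracts $WBV$, and the last step applies the non-expansiveness observation. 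This establishes \eqref{triangle} for $d_S$ for \emph{all} entry-wise and operator norms and \emph{all} matrices, and summing it with the analogous bound for $d_S(C,A)$ yields the triangle inequality for $\bar d_S$.

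It remains to handle symmetry. When $A,B\in\Symmetric^{n}$ and $\|\cdot\|$ is entry-wise (hence transpose-invariant, $\|M\|=\|M^{\top}\|$), I would use that $W\mapsto W^{\top}$ is a bijection of $\DStoch^n$ and that $(AW-WB)^{\top}=W^{\top}A-BW^{\top}$, so $\|AW-WB\|=\|BW^{\top}-W^{\top}A\|$; minimizing over $W$ gives $d_S(A,B)=d_S(B,A)$, so $d_S$ is a pseudometric on $\Symmetric^{n}$. For operator norms transpose-invariance fails in general, and for non-symmetric $A,B$ the identity above breaks; in both cases symmetry is simply \emph{imposed} by passing to $\bar d_S(A,B)=d_S(A,B)+d_S(B,A)$, which is symmetric by construction and inherits non-negativity, $\bar d_S(A,A)=0$, and the triangle inequality from $d_S$, giving a pseudometric on $\reals^{n\times n}$.

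I expect the non-expansiveness observation to be the conceptual crux: every other ingredient is a direct adaptation of the permutation case, but one must relax ``isometry under multiplication by $P$'' to ``non-expansiveness under multiplication by $W$,'' and it is precisely convexity via Birkhoff--von Neumann that makes this weaker statement hold for all entry-wise and operator norms. A secondary subtlety to get right is that the transpose trick recovering symmetry genuinely requires both symmetry of $A,B$ and transpose-invariance of the norm, which is exactly why the theorem splits into the two stated regimes.
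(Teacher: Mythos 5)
Your proof is correct and follows essentially the same approach as the paper: the paper packages the ingredients into Lemmas~\ref{trianglelemma} (triangle inequality from contractivity plus closure under products), \ref{weakprop} ($I\in S$), \ref{contractive_dstoch} (contractivity of $\DStoch^n$ via Birkhoff--von Neumann and convexity), and \ref{symmetry1} (symmetry from transpose-invariance plus closure under transposition), and your argument reproduces exactly these steps inline, including the same $WBV$ add-and-subtract decomposition, the same Jensen/Birkhoff non-expansiveness argument, and the same passage to $\bar d_S$ when transpose-invariance fails.
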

Hence, if $S=\DStoch^n$ and $\|\cdot\|$ is an arbitrary entry-wise norm, then \eqref{minnorm} defines a pseudometric over \emph{undirected} graphs. The symmetry property \eqref{symmetry} breaks if $\|\cdot\|$ is an operator norm or graphs are directed. In either  case, $d_S$ is  a quasimetric over the quotient space $\Omega/\!\sim_d$, and symmetry is  attained via the symmetric extension $\bar{d}_S$.

Theorem~\ref{thm:dstochastic} has significant practical implications. In contrast to $d_{\Permutations^n}$ and its extensions implied by Theorem~\ref{thm:permutation}, computing $d_{\DStoch^n}$ under any operator or entry-wise norm is \emph{tractable}~\cite{boyd2004convex}: it involves minimizing a convex function subject to linear constraints.
A more limited result extends to the  Stiefel manifold:
\begin{theorem}\label{thm:ortho}
If $S=\Ortho^n$ and $\|\cdot\|$ is either the operator or the entry-wise (i.e., Frobenius)  2-norm, then  $d_S$ given by \eqref{minnorm} is a pseudometric over $\Omega=\reals^{n\times n}$.
\end{theorem}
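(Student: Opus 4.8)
The plan is to verify the four pseudometric axioms \eqref{non-neg}, \eqref{weakdef}, \eqref{symmetry}, and \eqref{triangle} directly, exactly as one would for $S=\Permutations^n$, using three structural facts about $\Ortho^n$ together with one analytic fact about the norm. The structural facts are that $I\in\Ortho^n$, that $\Ortho^n$ is closed under transposition (since $P^\top=P^{-1}$ for orthogonal $P$), and that $\Ortho^n$ is closed under matrix multiplication; the analytic fact is that both the operator $2$-norm and the Frobenius norm are \emph{unitarily invariant}, i.e.\ $\|UMV\|=\|M\|$ for all $U,V\in\Ortho^n$ and $M\in\reals^{n\times n}$. Since $\Ortho^n$ is compact and $P\mapsto\|AP-PB\|$ is continuous, the minimum in \eqref{minnorm} is attained, so $d_S$ is well defined.

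First, non-negativity \eqref{non-neg} is immediate because norms are non-negative, and choosing $P=I\in\Ortho^n$ gives $\|AI-IA\|=0$, which yields $d_S(A,A)=0$, i.e.\ \eqref{weakdef}. Next, for symmetry I would fix $P\in\Ortho^n$ and put $Q=P^\top\in\Ortho^n$; using $P^\top P=PP^\top=I$ one checks that $P\,(BQ-QA)\,P=PB-AP=-(AP-PB)$, so unitary invariance gives $\|BQ-QA\|=\|AP-PB\|$. As $P\mapsto P^\top$ is a bijection of $\Ortho^n$ onto itself, minimizing over $P$ on both sides gives $d_S(B,A)=d_S(A,B)$, which is \eqref{symmetry}.

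For the triangle inequality I would let $P,Q\in\Ortho^n$ attain $d_S(A,B)$ and $d_S(B,C)$ respectively, so that $PQ\in\Ortho^n$ is feasible for the pair $(A,C)$; adding and subtracting $PBQ$ and applying the triangle inequality of $\|\cdot\|$ gives
\begin{align*}
d_S(A,C)\le\|A(PQ)-(PQ)C\| &= \|(AP-PB)Q+P(BQ-QC)\|\\
&\le \|(AP-PB)Q\|+\|P(BQ-QC)\|\\
&= \|AP-PB\|+\|BQ-QC\| = d_S(A,B)+d_S(B,C),
\end{align*}
where the third line is again unitary invariance. This establishes \eqref{triangle}, and hence that $d_S$ is a pseudometric on $\Omega=\reals^{n\times n}$.

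The hard part --- really the only subtle point --- is that the restriction to the $2$-norms is unavoidable: the symmetry and triangle steps both invoke $\|UMV\|=\|M\|$ for orthogonal $U,V$, and among the norms in play this holds only for the operator and entry-wise $2$-norms. (For $S=\Permutations^n$ the corresponding identity holds for \emph{every} entry-wise and operator norm, which is precisely why Theorem~\ref{thm:permutation} carries no norm restriction.) Once unitary invariance is in hand the argument is a two-line computation; the real content is recognizing that unitary invariance is exactly the property the three axioms need and that $\Ortho^n$, being a compact group, supplies the identity, the inverses, and the closure under products that make the computation go through. One should also note that positive definiteness \eqref{posdef} genuinely fails, since $d_S(A,B)=0$ whenever $B=P^\top A P$ for some $P\in\Ortho^n$ --- for symmetric $A,B$ this happens exactly when they are cospectral --- so ``pseudometric'' is the strongest conclusion available.
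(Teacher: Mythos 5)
Your proof is correct and takes essentially the same approach as the paper: the paper factors the argument through general lemmas about sets that are closed under multiplication/inversion, contain $I$, and are \emph{contractive} with respect to the norm (Lemmas~\ref{trianglelemma}--\ref{weakprop} and~\ref{contractive_ortho}), then specializes to $\Ortho^n$; you prove the same three axioms directly, using \emph{unitary invariance} of the two $2$-norms in place of contractivity. For the orthogonal group these are the same fact (contractivity on a group of invertible contractions forces $\|UM\|=\|M\|$), and the calculations in your symmetry and triangle-inequality steps are line-for-line the ones in the paper's lemma proofs, so the difference is one of packaging rather than substance.
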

Though  \eqref{minnorm} is not a convex  problem when $S=\Ortho^n$, it is also tractable. 
\citet{umeyama1988eigendecomposition} shows that the optimization can be solved exactly when $\|\cdot\| = \|\cdot\|_F$ and $\Omega=\Symmetric^n$ (i.e., for undirected graphs) by performing a spectral decomposition on $A$ and $B$. We extend this result, showing that the same procedure also applies when  $\|\cdot\|$ is the operator $2$-norm (\alt{see Thm.\;7 in \cite{arxivthis}}{see Thm.\;\ref{th:OP_norm_d_S_comp} in Appendix \ref{sec:umeyama}}).
In the general case of directed graphs, \eqref{minnorm} is
 a classic example of a problem that can be solved through optimization on manifolds~\citep{absil2009optimization}. 

\noindent\textbf{Equivalence Classes.}  The equivalence of matrix norms  implies that all pseudometrics $d_S$ defined through \eqref{minnorm} for a given $S$ have the same quotient space $\Omega/\!\sim_{d_S}$: if $d_S(A,B)=0$ for one matrix norm $\|\cdot\|$ in \eqref{minnorm}, it will be so for all.
When $S=\Permutations^n$,  $\Omega/\!\sim_{d_{\Permutations^n}}$ is the quotient space defined by graph isomorphism: any two adjacency matrices $A,B\in \reals^{n\times n}$  satisfy
$d_{\Permutations^n} (A,B)= 0$ if and only if their (possibly weighted) graphs are isomorphic.
When $S=\DStoch^n$, the quotient space $\Omega/\!\sim_{d_{\DStoch^n}}$ has a connection to the Weisfeiler-Lehman (WL) algorithm \citep{weisfeiler1968reduction} described in \alt{\citep{arxivthis}}{Appendix~\ref{sec:WL}}: \citet{ramana1994fractional} show that  $d_{\DStoch^n}(A,B)=0$ if and only if $G_A$ and $G_B$ receive identical colors by the WL algorithm. If $S=\Ortho^n$ and $\Omega=\Symmetric^n$, i.e., graphs are undirected, then  $\Omega/\!\sim_{d_{\Ortho^n}}$ is determined by \emph{co-spectrality}: $d_{\Ortho^n}(A,B)=0$ if and only if $A,B$ have the same spectrum. When $\Omega=\reals^{n\times n}$,  $d_{\Ortho^n}(A,B)=0$ implies that $A,B$ are co-spectral, but co-spectral matrices $A,B$ do not necessarily satisfy $d_{\Ortho^n}(A,B)=0$.

\subsection{Proof of Theorems~\ref{thm:permutation}--\ref{thm:ortho}.}
We define several properties that play a crucial role in our proofs.
We say that a set $S\subseteq \reals^{n\times n}$ is \emph{closed under multiplication}  
if $P,P'\in S$ implies that $P\cdot P' \in S$. 
We say that  $S$ is \emph{closed under transposition} if $P\in S$ implies that $P^\top \in S$, and \emph{closed under inversion} if $P\in S$ implies that $P^{-1} \in S$.
Finally, given a matrix norm $\|\cdot\|$, we say that set $S$ is \emph{contractive} w.r.t.~ $\|\cdot\|$ if 
$\|AP\|\leq \|A\|$ and $\|PA\|\leq \|A\|,$  for all $P\in S$ and $A\in\reals^{n\times n}$. Put differently, $S$ is contractive if and only if every  $P\in S$ is a contraction w.r.t.~$\|\cdot\|.$
We rely on several lemmas, whose proofs can be found in \alt{\citep{arxivthis}.}{Appendix~\ref{app:manyproofs}.} The first three establish conditions under which \eqref{minnorm} satisfies the triangle inequality \eqref{triangle}, symmetry \eqref{symmetry}, and weak property \eqref{weakdef}, respectively:
\begin{lemma}\label{trianglelemma} Given a matrix norm $\|\cdot\|$, suppose that set $S$ is (a) contractive w.r.t.~$\|\cdot\|$, and (b) closed under multiplication. Then, for any $A,B,C\in \reals^{n\times n}$,  $d_S$ given by \eqref{minnorm} satisfies
$d_S(A,C) \leq d_S(A,B) +d_S (B,C).$
\end{lemma}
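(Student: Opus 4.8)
The plan is to establish the triangle inequality by exhibiting, for any two "certificates" $P_1$ (optimal for the pair $(A,B)$) and $P_2$ (optimal for $(B,C)$), a single candidate matrix in $S$ for the pair $(A,C)$ whose objective value is bounded by $d_S(A,B)+d_S(B,C)$. The natural candidate is the product $P := P_1 P_2$, which lies in $S$ by hypothesis (b), closure under multiplication. So the whole argument reduces to bounding $\|AP - PC\| = \|A P_1 P_2 - P_1 P_2 C\|$.

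First I would insert the telescoping term $P_1 B P_2$: write
\begin{align}
A P_1 P_2 - P_1 P_2 C = (A P_1 - P_1 B) P_2 + P_1 (B P_2 - P_2 C).\nonumber
\end{align}
Then apply the triangle inequality for the norm $\|\cdot\|$ to split this into $\|(A P_1 - P_1 B) P_2\| + \|P_1 (B P_2 - P_2 C)\|$. Now I invoke hypothesis (a), contractivity: right-multiplication by $P_2\in S$ does not increase the norm, so $\|(A P_1 - P_1 B) P_2\| \le \|A P_1 - P_1 B\|$; likewise left-multiplication by $P_1\in S$ gives $\|P_1 (B P_2 - P_2 C)\| \le \|B P_2 - P_2 C\|$. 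Finally, since $P_1$ and $P_2$ were chosen as minimizers, $\|A P_1 - P_1 B\| = d_S(A,B)$ and $\|B P_2 - P_2 C\| = d_S(B,C)$, while $\|AP - PC\| \ge \min_{Q\in S}\|AQ - QC\| = d_S(A,C)$ because $P\in S$. Chaining these inequalities yields $d_S(A,C) \le d_S(A,B) + d_S(B,C)$.

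There is essentially no serious obstacle here; the argument is short once the right decomposition is spotted. The one point requiring a little care is that $S$ being closed and bounded guarantees the minima defining $d_S(A,B)$ and $d_S(B,C)$ are actually attained, so that genuine minimizers $P_1, P_2$ exist — this is already assumed in the setup around \eqref{minnorm}, so I would simply cite it. The other thing to keep in mind is that contractivity must be used in \emph{both} directions (left and right multiplication), which is exactly why the definition of "contractive w.r.t.~$\|\cdot\|$" in the text bundles both $\|AP\|\le\|A\|$ and $\|PA\|\le\|A\|$; the telescoping split is arranged precisely so that one term needs the right-multiplication bound and the other needs the left-multiplication bound.
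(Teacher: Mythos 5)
Your proof is correct and follows essentially the same route as the paper's: take minimizers $P'$ (for $(A,B)$) and $P''$ (for $(B,C)$), use $P'P'' \in S$ as the candidate, insert $P'BP''$ to telescope, and then apply contractivity from the right and left respectively. The telescoping identity you write out explicitly is exactly the split the paper uses implicitly when bounding $\| AP'P'' - P'P'' C\|$.
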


\begin{lemma}\label{symmetry2} Given a matrix norm $\|\cdot\|$, suppose that $S\subset \reals^{n\times n}$ is (a) contractive w.r.t.~$\|\cdot\|$, and (b)   closed under inversion. Then, for all $A,B\in \reals^{n\times n}$, $d_S(A,B) =d_S(B,A)$.
\end{lemma}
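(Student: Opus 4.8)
The plan is to prove the two inequalities $d_S(A,B)\le d_S(B,A)$ and $d_S(B,A)\le d_S(A,B)$ separately; since the hypotheses on $S$ are symmetric in $A$ and $B$, it suffices to establish one of them. First I would pick $P\in S$ attaining the minimum in \eqref{minnorm}, so that $d_S(A,B)=\|AP-PB\|$ — such a $P$ exists because $S$ is closed and bounded. The key manipulation is to conjugate the residual $AP-PB$ by $P^{-1}$: since $S$ is closed under inversion we have $P^{-1}\in S$, and a direct computation gives $P^{-1}(AP-PB)P^{-1} = P^{-1}A - BP^{-1}$.

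Next I would invoke contractivity twice. Because $P^{-1}\in S$ and $S$ is contractive w.r.t.~$\|\cdot\|$, left-multiplication by $P^{-1}$ does not increase the norm, and likewise right-multiplication by $P^{-1}$; hence
\[
\|P^{-1}A - BP^{-1}\| = \|P^{-1}(AP-PB)P^{-1}\| \le \|AP-PB\| = d_S(A,B).
\]
Finally, using that every matrix norm satisfies $\|{-}X\| = \|X\|$, I rewrite $P^{-1}A - BP^{-1} = -(BP^{-1} - P^{-1}A)$, so that $\|BP^{-1} - P^{-1}A\| \le d_S(A,B)$. Since $P^{-1}\in S$, the left-hand side is an admissible value in the minimization defining $d_S(B,A)=\min_{Q\in S}\|BQ-QA\|$, which yields $d_S(B,A)\le d_S(A,B)$. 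Exchanging the roles of $A$ and $B$ (the hypotheses on $S$ are unchanged) gives the reverse inequality, and hence equality.

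I do not expect a genuine obstacle here; the only points requiring care are that (i) both the left and the right multiplication by $P^{-1}$ must be covered by the contractivity hypothesis — which is precisely why the definition of \emph{contractive} in the excerpt requires $\|AP\|\le\|A\|$ \emph{and} $\|PA\|\le\|A\|$ for all $P\in S$ — and (ii) that the minimizing $P$ genuinely lies in $S$, so that $P^{-1}\in S$ by closure under inversion. Note that, in contrast to Lemma~\ref{trianglelemma}, closure under multiplication plays no role: only invertibility of the elements of $S$, contractivity, and the sign-invariance $\|{-}X\|=\|X\|$ of matrix norms are used.
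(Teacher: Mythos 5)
Your proof is correct and uses essentially the same ingredients as the paper: conjugation by $P^{-1}$, contractivity applied on both sides, closure of $S$ under inversion, and the sign-invariance of the norm. The only cosmetic difference is that the paper establishes the pointwise equality $\|AP-PB\|=\|BP^{-1}-P^{-1}A\|$ for every $P\in S$ and then reparametrizes the minimization ($\min_{P\in S}=\min_{P:P^{-1}\in S}$), whereas you fix a minimizer and run a two-inequalities argument, invoking symmetry for the reverse direction; both are valid, and your closing observation that closure under multiplication is not needed here is also accurate.
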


\begin{lemma}\label{weakprop} If $I\in S$, then $d_S(A,A)=0$ for all $A\in \reals^{n\times n}$.
\end{lemma}

Both the set of permutation matrices $\Permutations^n$ \emph{and} the Stiefel manifold $\Ortho^n$ are \emph{groups} w.r.t.~matrix multiplication: they are closed under multiplication, contain the identity $I$, and are closed under inversion. Hence, if they are also contractive w.r.t. a matrix norm $\|\cdot\|$, $d_{\Permutations^n}$ and $d_{\Ortho^n}$ defined in terms of this norm satisfy all assumptions of Lemmas~\ref{trianglelemma}--\ref{weakprop}. We therefore turn our attention to this property.
\begin{lemma} Let $\|\cdot\|$ be any operator or entry-wise norm. Then, $S=\Permutations^n$ is contractive w.r.t.~$\|\cdot\|$. \label{contractive_permutations}
\end{lemma}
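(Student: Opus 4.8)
The plan is to prove something slightly stronger than the statement, namely that every $P\in\Permutations^n$ acts as an \emph{isometry} for each of these norms (both for left and for right multiplication), from which contractivity ($\|AP\|\le\|A\|$ and $\|PA\|\le\|A\|$) follows immediately. The argument splits into the entry-wise and the operator cases.

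For an entry-wise $p$-norm (including $p=\infty$), I would observe that right-multiplication by a permutation matrix $P$ simply permutes the columns of $A$ and left-multiplication permutes its rows; in either case the multiset of absolute values of entries $\{\,|a_{ij}|\,\}_{i,j\in[n]}$ is preserved, and since the entry-wise $p$-norm depends only on this multiset, $\|AP\|=\|PA\|=\|A\|$, and in particular both are $\le\|A\|$.

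For an operator $p$-norm, I would use the definition $\|A\|_p=\sup_{\|x\|_p=1}\|Ax\|_p$ together with the fact that a permutation matrix preserves the vector $p$-norm: $\|Pz\|_p=\|z\|_p$ for every $z\in\reals^n$, because the coordinates of $z$ are merely reordered (and for $p=\infty$ the maximum modulus is unchanged). Applying this with $z=x$ shows that $x\mapsto Px$ is a bijection of the unit $p$-sphere onto itself, so $\|AP\|_p=\sup_{\|x\|_p=1}\|A(Px)\|_p=\sup_{\|y\|_p=1}\|Ay\|_p=\|A\|_p$; applying it instead with $z=Ax$ gives $\|PA\|_p=\sup_{\|x\|_p=1}\|P(Ax)\|_p=\sup_{\|x\|_p=1}\|Ax\|_p=\|A\|_p$. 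Combining the two cases yields $\|AP\|\le\|A\|$ and $\|PA\|\le\|A\|$ for all $P\in\Permutations^n$ and $A\in\reals^{n\times n}$, which is precisely the assertion that $\Permutations^n$ is contractive w.r.t.~$\|\cdot\|$.

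There is essentially no real obstacle here; the only mild care needed is to treat the $p=\infty$ endpoint of each family uniformly (the max-entry norm and the max-absolute-row-sum operator norm) and to phrase the column/row-permutation and coordinate-reordering observations so that a single argument covers all $p\in\naturals_+\cup\{\infty\}$. Note also that Lemmas~\ref{trianglelemma} and \ref{symmetry2} only ever use contractivity as an inequality, so establishing the stronger isometry property is more than sufficient for the applications that follow.
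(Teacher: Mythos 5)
Your proof is correct and follows essentially the same route as the paper: both arguments split into the entry-wise case (where $PA$ and $AP$ permute rows/columns and so preserve the multiset of entries) and the operator case (where the key fact is that vector $p$-norms are permutation-invariant). The only cosmetic difference is that for operator norms the paper invokes submultiplicativity together with $\|P\|=1$ to get $\|PA\|\le\|A\|$, whereas you unwind the supremum definition directly and obtain the slightly stronger isometry $\|PA\|=\|AP\|=\|A\|$; both rest on the same observation and the strengthening is harmless since later lemmas only use the inequality.
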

Hence, Theorem~\ref{thm:permutation} follows as a direct corollary of Lemmas~\ref{trianglelemma}--\ref{contractive_permutations}. Indeed, $d_{\Permutations^n}$ is non-negative, symmetric by Lemmas~\ref{symmetry2} and \ref{contractive_permutations}, satifies the triangle inequality by Lemmas~\ref{trianglelemma} and \ref{contractive_permutations}, as well as property \eqref{weakdef} by Lemma \ref{weakprop}; hence  $d_{\Permutations^n}$ is a pseudometric over $\reals^{n\times n}$.
Our next lemma shows that the Stiefel manifold $\Ortho^n$ is contractive for 2-norms:
\begin{lemma} \label{contractive_ortho} 
Let $\|\cdot\|$ be the operator $2$-norm or the Frobenius norm. Then, $S=\Ortho^n$ is contractive w.r.t.~$\|\cdot\|$.  
\end{lemma}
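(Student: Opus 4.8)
The plan is to reduce both cases to the single elementary fact that left- or right-multiplication by a matrix $U\in\Ortho^n$ is a Euclidean isometry. Concretely, I would first record that for every $U\in\Ortho^n$ and every $x\in\reals^n$ we have $\|Ux\|_2^2 = x^\top U^\top U x = x^\top x = \|x\|_2^2$, so $U$ preserves the vector $2$-norm; since $\Ortho^n$ is closed under transposition (indeed $U^\top=U^{-1}\in\Ortho^n$), the same holds for $U^\top$. Everything else follows from this observation together with submultiplicativity of the operator norm and the cyclic invariance of the trace.

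For the operator $2$-norm, the isometry property immediately gives $\|U\|_2 = \sup_{\|x\|_2=1}\|Ux\|_2 = 1$ for all $U\in\Ortho^n$. Submultiplicativity of the operator $2$-norm then yields $\|UA\|_2 \le \|U\|_2\,\|A\|_2 = \|A\|_2$ and, applying submultiplicativity with $U$ on the other side, $\|AU\|_2 \le \|A\|_2\,\|U\|_2 = \|A\|_2$; alternatively the bound on $\|AU\|_2$ follows from $\|AU\|_2 = \|U^\top A^\top\|_2$ together with the first bound and $\|M^\top\|_2=\|M\|_2$. This establishes contractivity for the operator $2$-norm (in fact with equality, since $U^{-1}\in\Ortho^n$ gives the reverse inequalities as well).

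For the Frobenius norm I would use $\|M\|_F^2 = \trace(M^\top M)$ and cyclicity of the trace: $\|UA\|_F^2 = \trace(A^\top U^\top U A) = \trace(A^\top A) = \|A\|_F^2$, and $\|AU\|_F^2 = \trace(U^\top A^\top A U) = \trace(A^\top A\,U U^\top) = \trace(A^\top A) = \|A\|_F^2$. Hence $\|UA\|_F = \|AU\|_F = \|A\|_F$, which is stronger than the required contraction. There is essentially no obstacle in this lemma; the only points requiring a little care are keeping the operator $2$-norm notation distinct from the entry-wise $2$-norm (the paper overloads $\|\cdot\|_2$), and spelling out the right-multiplication bound separately rather than assuming submultiplicativity handles both sides automatically.
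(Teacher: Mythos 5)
Your proof is correct, and on the Frobenius case it is in fact more careful than the paper's own. The paper argues via submultiplicativity after asserting that $\|U\|_2 = \|U\|_F = 1$ for $U\in\Ortho^n$; but $\|U\|_F = \sqrt{\trace(U^\top U)} = \sqrt{\trace(I)} = \sqrt{n}$, not $1$, so the chain $\|UA\|_F \le \|U\|_F\,\|A\|_F$ only yields $\|UA\|_F \le \sqrt{n}\,\|A\|_F$, which does not establish contractivity. (Salvaging the paper's route requires the \emph{mixed} inequality $\|UA\|_F \le \|U\|_2\,\|A\|_F$, which is true but is not what the text says.) Your direct computation $\|UA\|_F^2 = \trace(A^\top U^\top U A) = \trace(A^\top A) = \|A\|_F^2$, together with the cyclic-trace identity for $\|AU\|_F$, sidesteps this entirely and gives the sharper conclusion that orthogonal multiplication is a Frobenius-norm isometry, which is the right statement. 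For the operator $2$-norm your argument matches the paper's (norm $1$ plus submultiplicativity). You also explicitly treat right-multiplication rather than leaving it to symmetry, which the paper elides; that is a small but genuine improvement in rigor.
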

Theorem~\ref{thm:ortho} follows from Lemmas~\ref{trianglelemma}--\ref{weakprop} and Lemma~\ref{contractive_ortho}, along with the the fact that $\Ortho^n$ is a group.
Note that $\Ortho^n$ is \emph{not} contractive w.r.t.~other norms, e.g., $\|\cdot\|_1$ or $\|\cdot\|_{\infty}$.  
 Lemma~\ref{contractive_permutations} along with the  Birkoff-von Neumann theorem  imply   that $\DStoch^n$ is also contractive:
\begin{lemma} \label{contractive_dstoch} Let $\|\cdot\|$ be any operator or entry-wise norm. Then, $\DStoch^n$ is contractive w.r.t.~$\|\cdot\|$. 
\end{lemma}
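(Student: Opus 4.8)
The plan is to combine the Birkhoff--von Neumann theorem, $\DStoch^n=\conv(\Permutations^n)$, with Lemma~\ref{contractive_permutations}. Fix a norm $\|\cdot\|$ that is either an operator or an entry-wise norm, an arbitrary $W\in\DStoch^n$, and an arbitrary $A\in\reals^{n\times n}$; the goal is to show $\|AW\|\le\|A\|$ and $\|WA\|\le\|A\|$. First I would use the Birkhoff--von Neumann characterization to write $W=\sum_{k=1}^{m}\lambda_k P_k$ with $P_k\in\Permutations^n$, $\lambda_k\ge 0$, and $\sum_{k=1}^{m}\lambda_k=1$. By linearity of matrix multiplication, $AW=\sum_{k=1}^{m}\lambda_k AP_k$.

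Next I would apply the triangle inequality and absolute homogeneity of the norm, followed by Lemma~\ref{contractive_permutations}: $\|AW\|=\big\|\sum_{k=1}^{m}\lambda_k AP_k\big\|\le\sum_{k=1}^{m}\lambda_k\|AP_k\|\le\sum_{k=1}^{m}\lambda_k\|A\|=\|A\|$, where the second inequality uses that every $P_k$ is a contraction w.r.t.~$\|\cdot\|$. The bound $\|WA\|\le\|A\|$ follows identically after writing $WA=\sum_{k=1}^{m}\lambda_k P_k A$ and invoking the other half of Lemma~\ref{contractive_permutations}. Since $A$ and $W$ were arbitrary, $\DStoch^n$ is contractive w.r.t.~$\|\cdot\|$, which is exactly the claim.

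There is no genuine obstacle here: the argument uses only the convex-hull description of the Birkhoff polytope, the norm axioms (triangle inequality and homogeneity), and the already-established contractivity of permutation matrices. The single point worth emphasizing is that the proof is deliberately norm-agnostic --- it never appeals to submultiplicativity or to any structural feature of the norm beyond convexity --- so it covers all operator and entry-wise norms simultaneously, as asserted in the statement.
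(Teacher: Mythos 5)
Your proposal is correct and follows the same route as the paper: both decompose $W\in\DStoch^n$ via Birkhoff--von Neumann into a convex combination of permutation matrices and then reduce to Lemma~\ref{contractive_permutations}. The only cosmetic difference is that you invoke the triangle inequality plus homogeneity where the paper cites Jensen's inequality for the convex function $\|\cdot\|$ --- the two are the same calculation.
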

The Birkhoff polytope $\DStoch^n$ is \emph{not} a group, as it is not closed under inversion. Nevertheless, it is closed under transposition; in establishing (partial) symmetry of $d_{\DStoch^n}$, we leverage the following lemma: 
\begin{lemma}\label{symmetry1} Suppose that  $\| \cdot\|$ is transpose invariant, and  $S$ is closed under transposition. Then, $d_S(A,B)=d_S(B,A)$ for all $A,B\in \Symmetric^n$.
\end{lemma}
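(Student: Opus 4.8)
The plan is to exploit the symmetry of $A$ and $B$ together with the transpose invariance of $\|\cdot\|$ to convert any feasible $P$ for the problem defining $d_S(A,B)$ into a feasible matrix for the problem defining $d_S(B,A)$ with the \emph{same} objective value, and vice versa; minimizing then gives equality.

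First, I would fix $A,B\in\Symmetric^n$ and an arbitrary $P\in S$, and compute the transpose of the matrix $AP-PB$. Since $A^\top=A$ and $B^\top=B$, we get $(AP-PB)^\top = P^\top A^\top - B^\top P^\top = P^\top A - B P^\top$. Because every norm satisfies $\|X\|=\|-X\|$ and, by hypothesis, $\|X\|=\|X^\top\|$, this yields the pointwise identity $\|AP-PB\| = \|B P^\top - P^\top A\|$ for every $P\in S$.

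Second, I would take the minimum over $P\in S$ on both sides. Using that $S$ is closed under transposition, the map $P\mapsto P^\top$ is a bijection of $S$ onto itself, so $\min_{P\in S}\|B P^\top - P^\top A\| = \min_{Q\in S}\|BQ-QA\| = d_S(B,A)$. Combining with $d_S(A,B)=\min_{P\in S}\|AP-PB\|$ gives $d_S(A,B)=d_S(B,A)$, as desired.

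There is no genuine obstacle here; the points to be careful about are (i) the argument uses symmetry of \emph{both} $A$ and $B$ essentially, which is why this route only delivers symmetry on $\Omega=\Symmetric^n$ (and explains why Theorem~\ref{thm:dstochastic} resorts to the symmetric extension $\bar d_S$ for operator norms or directed graphs), and (ii) transpose invariance $\|X\|=\|X^\top\|$ must actually hold for the chosen norm — true for all entry-wise $p$-norms and for the operator $2$-norm, but false for the operator $1$- and $\infty$-norms, where instead $\|X\|_1=\|X^\top\|_\infty$.
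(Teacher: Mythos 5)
Your proof is correct and follows essentially the same route as the paper's: both start from the identity $\|AP-PB\| = \|BP^\top - P^\top A\|$ (via transpose invariance, symmetry of $A,B$, and $\|X\|=\|-X\|$), then re-index the minimum over $S$ using closure under transposition. Your version is slightly more explicit — spelling out the $\|X\|=\|-X\|$ step and noting that $P\mapsto P^\top$ is a bijection of $S$ — and the closing remarks about which norms are transpose invariant and why symmetry of both matrices is essential are accurate and match the paper's discussion around Theorem~\ref{thm:dstochastic}.
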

The first part of Theorem~\ref{thm:dstochastic} therefore follows from Lemmas ~\ref{trianglelemma}, \ref{weakprop}, and \ref{contractive_dstoch}, as $\DStoch^n$ is closed under transposition, contains the identity $I$, and is closed under multiplication, while all entry-wise norms are transpose invariant. Operator norms are not transpose invariant. However, if $\|\cdot\|$ is an operator norm, or $\Omega=\reals^{n\times n}$, then Lemma~\ref{contractive_dstoch} and Lemma~\ref{trianglelemma} imply that $d_{\DStoch^n}$ satisfies non-negativity \eqref{non-neg} and the triangle inequality \eqref{triangle}, while Lemma~\ref{weakprop} implies that it satisfies \eqref{weakdef}. These properties are inherited by extension $\bar{d}_{S}$, which also satisfies symmetry \eqref{symmetry}, and Theorem~\ref{thm:dstochastic} follows. \qedh

\section{Incorporating Metric Embeddings}
\label{sec:embedding}
We have seen that the chemical distance $d_{\Permutations^n}$ can be \emph{relaxed} to $d_{\DStoch^n}$ or $d_{\Ortho^n}$, gaining tractability while still maintaining the metric property. In practice, nodes in a graph often contain additional atributes that one might wish to leverage when computing distances. In this section, we show that such attributes can be seamlessly incorporated in $d_S$ either as soft or hard constraints, \emph{without violating the metric property}.

\noindent\textbf{Metric Embeddings.}
Given a graph $G_A$ of size $n$, a \emph{metric embedding} of $G_A$  is a mapping $\embed_A:[n]\to \tilde{\Omega} $ from the nodes of the graph to a metric space $(\tilde{\Omega},\tilde{d})$. That is, $\embed_A$ maps nodes of the graph to $\tilde{\Omega}$, where $\tilde{\Omega}$ is endowed with a metric $\tilde{d}$. We refer to a graph  endowed with an embedding $\psi_A$ as an \emph{embedded graph}, and denote this by $(A,\embed_A)$, where $A\in \reals^{n\times n}$ is the adjacency matrix of $G_A$.
 We list two examples:

\noindent\emph{Example 1: Node Attributes.} Consider an embedding of a graph to $(\reals^k,\|\cdot\|_2)$ in which every node $v\in V$ is mapped to a $k$-dimensional vector describing ``local'' attributes. These can be \emph{exogenous}: e.g., features extracted from a user's profile (age, binarized gender, etc.) in a social network. Alternatively, attributes may be \emph{endogenous} or \emph{structural}, extracted from the adjacency matrix $A$, e.g.,  the node's degree, the size of its $k$-hop neigborhood,  its page-rank, etc.\\
\noindent\emph{Example 2: Node Colors}. Let $\tilde{\Omega}$ be an arbitrary finite set endowed with the Kronecker delta as a metric, that is, for $s,s'\in \tilde{\Omega}$, $\tilde{d}(s,s')=0$ if $s=s'$, while $\tilde{d}(s,s')=\infty$ if $s\neq s'$.
Given a graph $G_A$, a mapping $\embed_A:[n]\to\tilde{\Omega}$ is then a metric embedding. The values of $\tilde{\Omega}$ are invariably called \emph{colors} or \emph{labels}, and a graph embedded in $\tilde{\Omega}$ is a \emph{colored} or \emph{labeled} graph. Colors can again be \emph{exogenous} or \emph{structural}: e.g., if the graph represents an organic molecule, colors can correspond to atoms, while structural colors can be, e.g.,  the output of the WL algorithm \alt{\citep{weisfeiler1968reduction}}{(see Appendix~\ref{sec:WL})} after $k$ iterations.

 As discussed below, node attributes translate to \emph{soft} constraints in metric \eqref{minnorm}, while node colors correspond to \emph{hard} constraints. The unified view through embeddings allows us to establish metric properties for both simultaneously~(c.f.~Thm.~\ref{thm:permutation_linear} and~\ref{thm:dstochastic_linear}) .

\noindent\textbf{Embedding Distance.} Consider two embedded graphs $(A,\embed_A)$, $(B,\embed_B)$ of size $n$ that are embedded \emph{in the same metric space $(\tilde{\Omega},\tilde{d})$. } For $u\in [n]$ a node in the first graph, and $v\in[n]$ a node in the second graph, the embedded distance between the two nodes is given by $\tilde{d}(\embed_A(u),\embed_B(v))$. Let
 $ D_{\psi_A,\psi_B} = [\tilde{d}(\embed_A(u),\embed_B(v))]_{u\in V ,v\in V}\in \reals_+^{n\times n} $
be the corresponding matrix of embedded distances.
 After mapping nodes to the same metric space, it is natural to seek $P\in \Permutations^n$ that preserve the  \emph{embedding distance}. This amounts to finding a $P\in\Permutations^n$ that minimizes:
 \begin{align} \textstyle\trace\left( P^\top D_{\psi_A,\psi_B} \right) =\sum_{u,v\in [n]}P_{u,v}\tilde{d}(\embed_A(u),\embed_B(v)).
\label{embeddingdistance} \end{align}
Note that, in the case of colored graphs and the Kronecker delta distance,  minimizing \eqref{embeddingdistance}  finds a $P \in \Permutations^n$ that maps nodes in $A$ nodes in $B$ of equal color.
It is not hard to verify\footnote{This follows from Thm.~\ref{thm:permutation_linear} for $A=B=0$, i.e., for distances between embedded graphs with no edges.} that 
$\min_{P\in \Permutations^n}  \trace\left( P^\top D_{\psi_A,\psi_B} \right) $
  induces a metric between graphs embedded in $(\tilde{\Omega},\tilde{d})$.
Despite the combinatorial nature of  $\Permutations^n$, \eqref{embeddingdistance} is a maximum weighted matching problem, which can be solved through, e.g., the Hungarian algorithm \citep{kuhn1955hungarian} in polynomial time in $n$. We note that this  metric is not as expressive as \eqref{minnorm}: depending on the definition of the embeddings $\embed_A$, $\embed_B$, attributes may only capture ``local'' similarities between nodes, as opposed to the ``global'' view of a mapping attained by \eqref{minnorm}.

\noindent\textbf{A Unified, Tractable Metric.}
Motivated by the above considerations, we focus on unifying the ``global'' metric \eqref{minnorm} with the ``local'' metrics induced by arbitrary graph embeddings. Proofs for the two theorems below are provided in the supplement.
Given a metric space $(\tilde{\Omega},\tilde{d})$,  let $\Psi^n_{\tilde{\Omega}} = \{ \embed: [n]\to \tilde{\Omega}\} $ 
be the set of all mappings from $[n]$ to $\tilde{\Omega}$.  Then, given two embedded graphs $(A,\psi_A), (B,\psi_B)\in \reals^{n\times n}\times \Psi^n_{\tilde{\Omega}}$, we define:
 \begin{align}
\begin{split}
d_{S}\left((A,\psi_A),(B,\psi_B)\right) = \min_{P\in S} \big[ &\|AP-PB\|+\ldots\\ &+ \trace (P^\top D_{\embed_A,\embed_B} ) \big]
\end{split}
  \label{addlocal}\end{align} 
for some compact set $S\subset \reals^{n\times n}$ and matrix norm $\|\cdot\|$. 
Our next result states that incorporating this linear term does not affect the pseudometric property of $d_S$.
\begin{theorem}
\label{thm:permutation_linear} 
If $S=\Permutations^n$ and $\|\cdot\|$ is an arbitrary entry-wise or operator norm, then $d_S$ given by \eqref{addlocal} is a pseudometric over the set of embedded graphs 
 $\Omega=\reals^{n\times n}\times \Psi_{\tilde{\Omega}}^n$.
\end{theorem}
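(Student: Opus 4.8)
The plan is to follow the template of the proof of Theorem~\ref{thm:permutation}, viewing the objective in \eqref{addlocal} as the sum of the ``global'' term $\|AP-PB\|$ and the ``local'' term $\trace(P^\top D_{\psi_A,\psi_B})$, and to verify non-negativity, the weak property \eqref{weakdef}, symmetry, and the triangle inequality one at a time, re-using Lemmas~\ref{trianglelemma}--\ref{contractive_permutations} for the global term and supplying a parallel argument for the local term. One should keep in mind that, in examples such as the Kronecker-delta space of colors, $\tilde d$ (hence $d_S$) may take the value $+\infty$, so ``pseudometric'' is meant in the extended-real-valued sense; all the inequalities below hold in $[0,+\infty]$. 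Non-negativity and \eqref{weakdef} are immediate: $\|AP-PB\|\ge 0$ and, since every $P\in\Permutations^n$ has non-negative entries and $\tilde d\ge 0$, $\trace(P^\top D_{\psi_A,\psi_B})=\sum_{u,v}P_{u,v}\,\tilde d(\psi_A(u),\psi_B(v))\ge 0$; and taking $P=I$ in \eqref{addlocal} gives $\|AI-IA\|=0$ and $\trace(I^\top D_{\psi_A,\psi_A})=\sum_u\tilde d(\psi_A(u),\psi_A(u))=0$.

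The triangle inequality is the substantive step. Given $(A,\psi_A),(B,\psi_B),(C,\psi_C)$, pick $P'$ optimal for $(A,\psi_A),(B,\psi_B)$ and $P''$ optimal for $(B,\psi_B),(C,\psi_C)$; as $\Permutations^n$ is closed under multiplication, $P'P''$ is feasible for $(A,\psi_A),(C,\psi_C)$, so $d_S((A,\psi_A),(C,\psi_C))\le\|AP'P''-P'P''C\|+\trace((P'P'')^\top D_{\psi_A,\psi_C})$. For the norm term, the computation behind Lemma~\ref{trianglelemma} (using only that $\Permutations^n$ is contractive, Lemma~\ref{contractive_permutations}, and closed under multiplication) gives $\|AP'P''-P'P''C\|\le\|AP'-P'B\|+\|BP''-P''C\|$. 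For the local term, write $P',P''$ as the matrices of maps $\sigma',\sigma'':[n]\to[n]$; then $(P'P'')_{u,w}=1$ iff $w=\sigma''(\sigma'(u))$, whence $\trace((P'P'')^\top D_{\psi_A,\psi_C})=\sum_u\tilde d(\psi_A(u),\psi_C(\sigma''(\sigma'(u))))$. Applying the triangle inequality of $\tilde d$ termwise at the point $\psi_B(\sigma'(u))$ and reindexing $v=\sigma'(u)$ in the second sum (a bijection of $[n]$) yields $\sum_u\tilde d(\psi_A(u),\psi_C(\sigma''(\sigma'(u))))\le\sum_u\tilde d(\psi_A(u),\psi_B(\sigma'(u)))+\sum_v\tilde d(\psi_B(v),\psi_C(\sigma''(v)))=\trace(P'^\top D_{\psi_A,\psi_B})+\trace(P''^\top D_{\psi_B,\psi_C})$. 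Summing the two bounds gives $d_S((A,\psi_A),(C,\psi_C))\le d_S((A,\psi_A),(B,\psi_B))+d_S((B,\psi_B),(C,\psi_C))$.

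For symmetry, use that $\Permutations^n$ is closed under transposition/inversion ($P^{-1}=P^\top$). Fix $P\in\Permutations^n$ and set $Q=P^\top\in\Permutations^n$. The norm computation behind Lemma~\ref{symmetry2} (again needing only contractivity and closure under inversion) gives $\|AP-PB\|=\|BQ-QA\|$. For the local term, symmetry of $\tilde d$ means exactly $D_{\psi_B,\psi_A}=D_{\psi_A,\psi_B}^\top$, so using $\trace(X^\top)=\trace(X)$ and $\trace(XY)=\trace(YX)$ one gets $\trace(P^\top D_{\psi_A,\psi_B})=\trace(Q D_{\psi_A,\psi_B})=\trace(Q^\top D_{\psi_B,\psi_A})$. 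Hence the bracketed objective of \eqref{addlocal} for $(A,\psi_A),(B,\psi_B)$ evaluated at $P$ equals the objective for $(B,\psi_B),(A,\psi_A)$ evaluated at $Q$; minimizing over $P\in\Permutations^n$ (equivalently over $Q\in\Permutations^n$) gives $d_S((A,\psi_A),(B,\psi_B))=d_S((B,\psi_B),(A,\psi_A))$, completing the proof.

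I expect the only real obstacle to be the bookkeeping in the local term of the triangle inequality: one must observe that matrix multiplication of permutations corresponds to composition of the underlying node maps, so that a single termwise application of the triangle inequality of $\tilde d$ together with one reindexing reproduces exactly the sum split $\trace(P'^\top D_{\psi_A,\psi_B})+\trace(P''^\top D_{\psi_B,\psi_C})$. The secondary point to be careful about is the transpose identity $D_{\psi_B,\psi_A}=D_{\psi_A,\psi_B}^\top$ used for symmetry of the local term. Everything involving the norm $\|AP-PB\|$ is inherited verbatim from Theorem~\ref{thm:permutation}.
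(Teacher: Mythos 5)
Your proof is correct and follows the paper's overall strategy: inherit the norm-term inequalities from Lemmas~\ref{trianglelemma} and~\ref{symmetry2}, and supply a parallel argument for the trace term. The one place where your route genuinely differs from the paper's is the triangle-inequality bound for the local term: you exploit the bijection structure of permutations directly, writing $P'P''$ as the composition $\sigma''\circ\sigma'$, applying the triangle inequality of $\tilde d$ termwise, and reindexing by the bijection $\sigma'$. The paper's Lemma~\ref{newtrianglelemma} instead proves this bound for arbitrary $S\subseteq\DStoch^n$ by expanding $\trace((P'P'')^\top D_{\psi_A,\psi_C})=\sum_{u,v,k}P'_{uk}P''_{kv}\,\tilde d(\psi_A(u),\psi_C(v))$, applying the triangle inequality of $\tilde d$ inside the sum, and then using the doubly stochastic row/column sums $\sum_v P''_{kv}\le 1$ and $\sum_u P'_{uk}\le 1$; when $P',P''$ are permutations this reduces precisely to your reindexing argument. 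The advantage of the paper's more general formulation is that the very same lemma is reused verbatim for Theorem~\ref{thm:dstochastic_linear} (the $\DStoch^n$ case), which your permutation-specific computation does not cover. Your symmetry argument coincides in substance with the paper's Lemma~\ref{newsymmetry2} — both use $P^{-1}=P^\top$ together with $D_{\psi_B,\psi_A}=D_{\psi_A,\psi_B}^\top$ and trace invariance — and the observation about extended-real-valued distances under the Kronecker delta is a fair point that the paper leaves implicit.
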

\begin{figure*}[!t]
\centering
\begin{minipage}[c]{0.36\textwidth}
\includegraphics[width=1.21\textwidth]{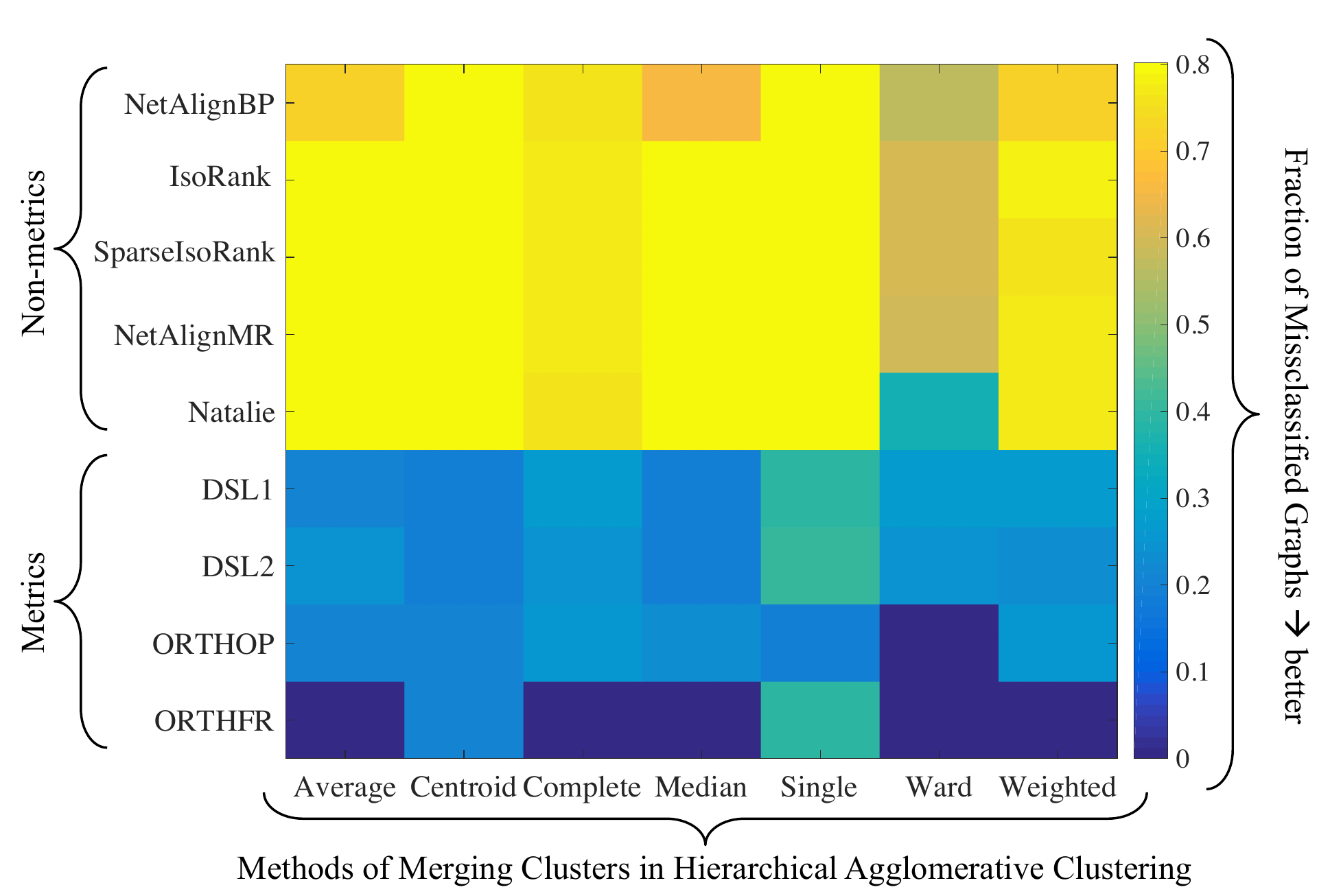}
\put(-158,134){\scriptsize (a) Clustering Misclassification Error}
\put(-67,121){\scriptsize 0.58}
\put(-67,109){\scriptsize 0.61}
\put(-67,97){\scriptsize 0.61}
\put(-67,85){\scriptsize 0.59}
\put(-67,73){\scriptsize 0.36}
\put(-104,61){\scriptsize 0.20}
\put(-104,49){\scriptsize 0.20}
\put(-67,37){\scriptsize \textcolor{white}{0.00}}
\put(-67,25){\scriptsize \textcolor{white}{0.00}}
\end{minipage}\hspace*{8mm}
\begin{minipage}[c]{0.64\textwidth}
{\includegraphics[trim = 0mm 0mm 5.5cm 0mm, clip,width=0.47\columnwidth]{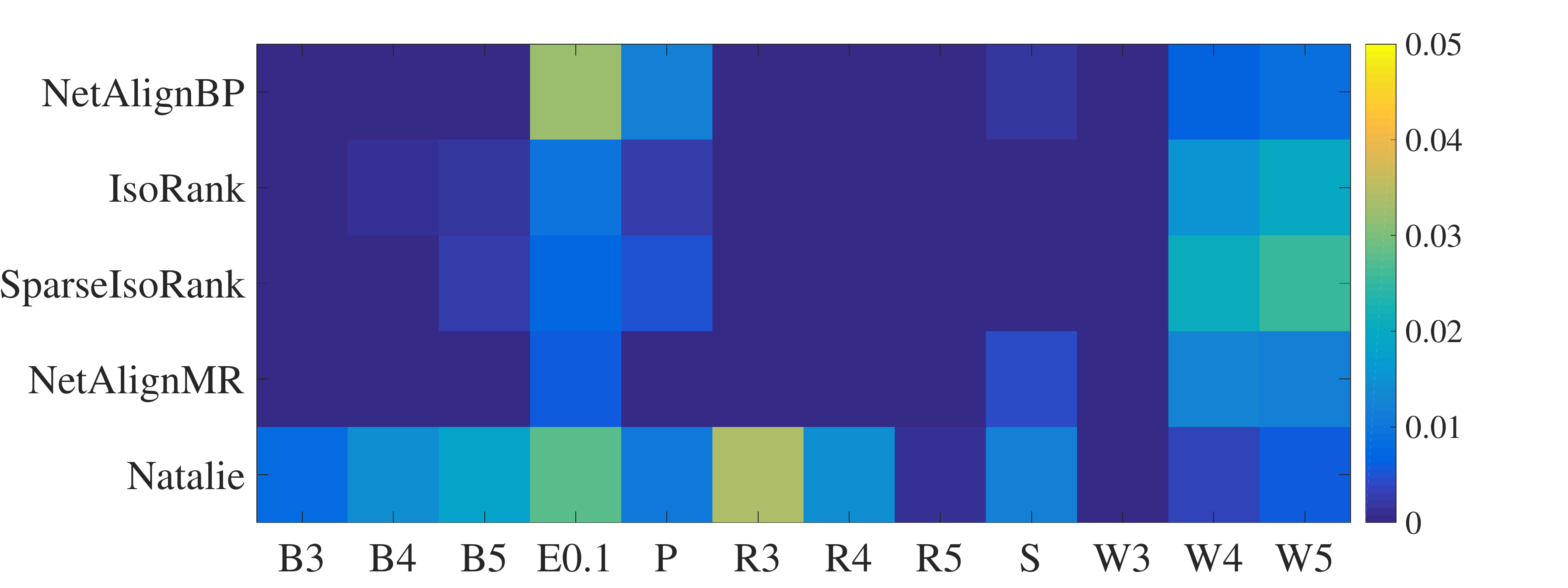}
\includegraphics[trim = 57mm 0mm 2.1cm 0mm, clip,width=0.44\columnwidth]{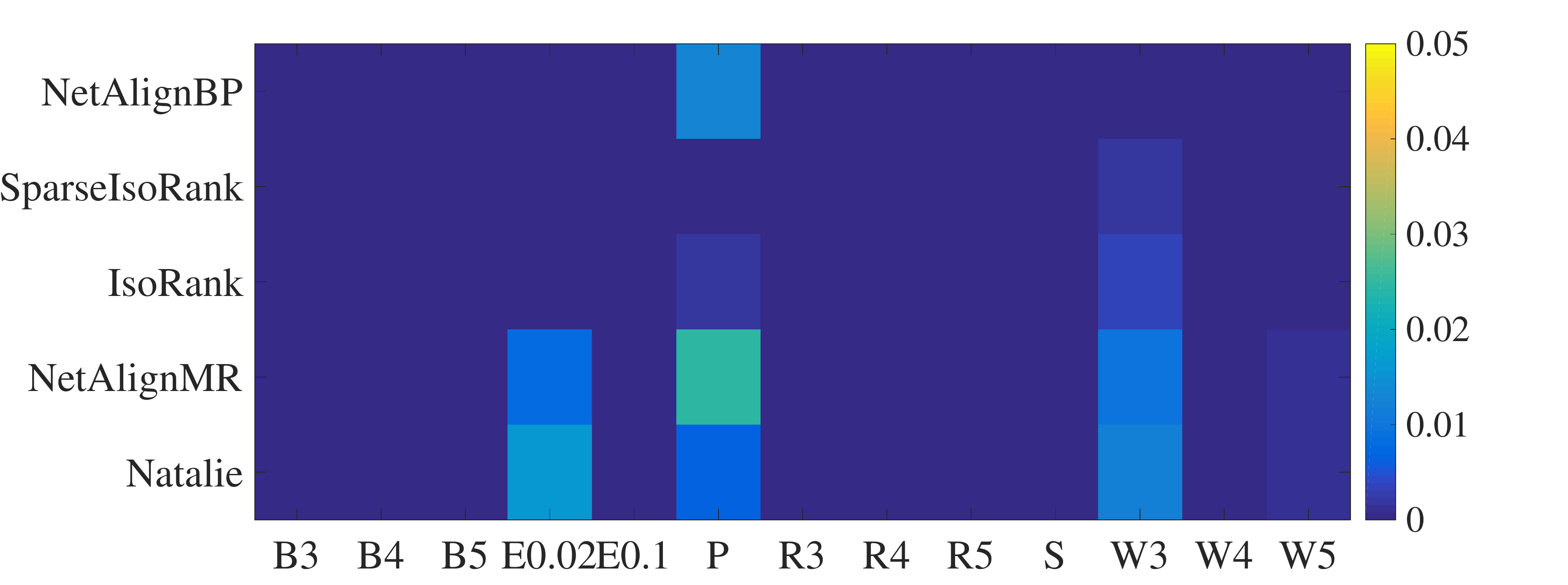}
\put(-110,65){\scriptsize (c) TIVs, $n = 50$}
\put(-230,65){\scriptsize (b) TIVs, $n = 10$}\\\medskip
\put(25,55){\tiny 
\begin{tabular}{||c l ||} 
 \hline
  & Description \\ [0.5ex] 
 \hline\hline
 B$d$ & Barabasi Albert of degree $d$ \citep{albert2002statistical} \\ 
 \hline
E$p$ &  Erd\H{o}s-R\'enyi with probability $p$ \citep{erdos1959random}  \\
 \hline
 P &  Power Law Tree \citep{mahmoud1993structure}  \\
 \hline
  R$d$ & Regular Graph of degree $d$ \citep{bollobas1998random}  \\
 \hline
 S &  Small World \citep{kleinberg2000small} \\ 
 \hline
  W$d$ & Watts Strogatz of degree $d$  \citep{watts1998collective} \\
 \hline
\end{tabular}}
\put(50,87){\scriptsize (d) Synthetic Graph Classes}
\put(170,0){
\includegraphics[trim = 9.5mm 0mm 12mm 0mm, clip,width=4.3cm]{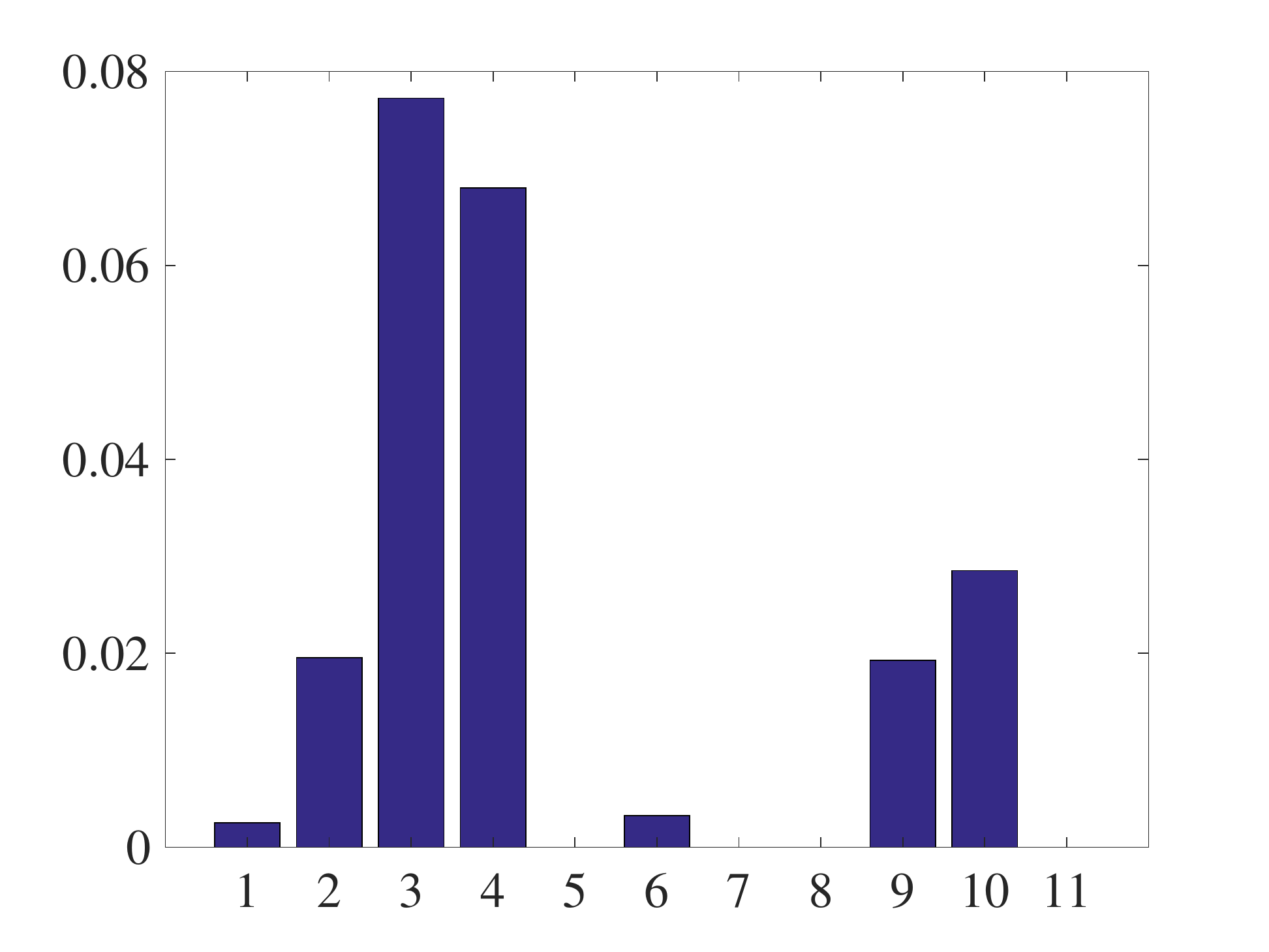}
\put(-71,89){\tiny 1 - \InnerDSLtwo}
\put(-71,84){\tiny 2 - \NetAlignBP}
\put(-71,79){\tiny 3 - \IsoRank}
\put(-71,74){\tiny 4 - \SparseIsoRank}
\put(-71,69){\tiny 5 - \NetAlignMR}
\put(-71,64){\tiny 6 - \Natalie}
\put(-71,59){\tiny 7 - \DSLone}
\put(-71,54){\tiny 8 - \DSLtwo}
\put(-71,49){\tiny 9 - \InnerPerm}
\put(-71,44){\tiny 10 - \InnerDSLone}
\put(-71,39){\tiny 11 - \EXACT}
\put(-71,34){\tiny 12 - {\bf ORTHOP}}
\put(-71,28){\tiny 13 - {\bf ORTHFR}}
\put(-95,100){\scriptsize (e) TIVs, {\em small graphs}}
} }
\end{minipage}
\vspace*{-1em}
\caption{ 
A clustering experiment using metrics and non-metrics (y-axis) for different clustering parameters (x-axis) is shown in  {\em (a), left}.  We sample graphs with $n=50$ nodes  from the six classes, shown in the adjacent table in 
{\em(d), bottom-center}. We compute distances between them using nine different algorithms from Table~\ref{table:comp}. Only the distances in our family (DSL1, DSL2, ORTHOP, and ORTHFR) are metrics. The resulting graphs are clustered using hierarchical agglomerative  clustering \citep{hartigan1975clustering} using $Average$, $Centroid$, $Complete$, $Median$, $Single$, $Ward$, $Weighted$ as a means of merging clusters. Colors represent the fraction of misclassified graphs, with the minimal misclassification rate per distance labeled explicitly. Metrics outperform other distance scores across all clustering methods. The error rate of a random guess is $\approx 0.8$. Subfigures {\em (b)}  and {\em (c), top center and right,} shows that non-metric distances  produce triangle inequality violations (TIVs) which contribute to poor clustering results; the figure shows the fraction of TIVs within different $10$-node and $50$ node graph families under these algorithms. Finally, subfigure {\em (e), bottom right},  shows the fraction of triangle inequality violations for different algorithms on the {\em small graphs} dataset of all 7-node graphs.}\label{fig:clustering_exp}
\end{figure*}

 \noindent We stress here that this result is non-obvious: is not true that adding \emph{any} linear term to $d_S$ leads to a quantity that satisfies the triangle inequality. It is precisely because $D_{\embed_A,\embed_B}$  contains pairwise distances that Theorem~\ref{thm:permutation_linear} holds.
We can similarly extend Theorem~\ref{thm:dstochastic}: 
\begin{theorem}\label{thm:dstochastic_linear} 
If $S=\DStoch^n$ and $\|\cdot\|$ is an arbitrary entry-wise  norm, then $d_S$ given by \eqref{addlocal}  is a pseudometric over
 $\Omega=\Symmetric^n\times \Psi_{\tilde{\Omega}}^n$,  the set of symmetric graphs embedded in $(\tilde{\Omega},\tilde{d})$.
 Moreover, if $\|\cdot\|$ is an arbitrary entry-wise or operator norm, then the symmetric extension
$\bar{d}_S$ of \eqref{addlocal}
is a pseudometric over  $\Omega=\reals^{n\times n}\times \Psi_{\tilde{\Omega}}^n$.
\end{theorem}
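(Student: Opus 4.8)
The plan is to reuse the template of the proof of Theorem~\ref{thm:dstochastic}, verifying each pseudometric axiom for \eqref{addlocal} while carrying along the additional linear term $\trace(P^\top D_{\embed_A,\embed_B})$. Non-negativity \eqref{non-neg} is immediate: every $P\in\DStoch^n$ has nonnegative entries and $\tilde d\ge 0$, so the trace term is nonnegative, and the norm term is nonnegative as well. Property \eqref{weakdef} follows by evaluating at $P=I\in\DStoch^n$: the norm term vanishes as in Lemma~\ref{weakprop}, while $\trace(I^\top D_{\embed_A,\embed_A})=\sum_{u}\tilde d(\embed_A(u),\embed_A(u))=0$ since $\tilde d$ is a metric; hence $d_S((A,\embed_A),(A,\embed_A))=0$, and this is inherited by $\bar d_S$.

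The only genuinely new ingredient is a triangle inequality for the matching cost under matrix composition. Given embedded graphs $(A,\embed_A),(B,\embed_B),(C,\embed_C)$, let $P'$ and $P''$ be minimizers in \eqref{addlocal} for the pairs $((A,\embed_A),(B,\embed_B))$ and $((B,\embed_B),(C,\embed_C))$, respectively. Since $\DStoch^n$ is closed under multiplication, $P'P''\in\DStoch^n$ is feasible for the $(A,C)$ problem. For the norm part, the proof of Lemma~\ref{trianglelemma} applies verbatim, using that $\DStoch^n$ is contractive (Lemma~\ref{contractive_dstoch}), to give $\|AP'P''-P'P''C\|\le\|AP'-P'B\|+\|BP''-P''C\|$. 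For the trace part, writing $(P'P'')_{u,w}=\sum_v P'_{u,v}P''_{v,w}$, applying the pointwise triangle inequality $\tilde d(\embed_A(u),\embed_C(w))\le \tilde d(\embed_A(u),\embed_B(v))+\tilde d(\embed_B(v),\embed_C(w))$ termwise, and using that the row sums of $P''$ and the column sums of $P'$ are all one,
\begin{align*}
\trace\big((P'P'')^\top D_{\embed_A,\embed_C}\big)
&=\textstyle\sum_{u,v,w}P'_{u,v}P''_{v,w}\,\tilde d(\embed_A(u),\embed_C(w))\\
&\le\textstyle\sum_{u,v}P'_{u,v}\,\tilde d(\embed_A(u),\embed_B(v))+\sum_{v,w}P''_{v,w}\,\tilde d(\embed_B(v),\embed_C(w))\\
&=\trace(P'^\top D_{\embed_A,\embed_B})+\trace(P''^\top D_{\embed_B,\embed_C}).
\end{align*}
Summing the two bounds and using $d_S((A,\embed_A),(C,\embed_C))\le\|AP'P''-P'P''C\|+\trace((P'P'')^\top D_{\embed_A,\embed_C})$ yields \eqref{triangle}. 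This argument uses only closure of $\DStoch^n$ under multiplication, its contractivity, and the metric structure of $(\tilde{\Omega},\tilde d)$, so it holds for every entry-wise or operator norm and without any symmetry assumption on $A,B,C$.

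For the first claim --- entry-wise norm over $\Omega=\Symmetric^n\times\Psi^n_{\tilde{\Omega}}$ --- it remains to verify symmetry \eqref{symmetry}, the analogue of Lemma~\ref{symmetry1}. Entry-wise norms are transpose- and negation-invariant, $\DStoch^n$ is closed under transposition, and $D_{\embed_B,\embed_A}=D_{\embed_A,\embed_B}^\top$ because $\tilde d$ is symmetric; hence for symmetric $A,B$ the objective of the $(A,B)$ problem at $P$ equals the objective of the $(B,A)$ problem at $P^\top$ (the norm term by transposing $AP-PB$, the trace term via $\trace(P^\top D_{\embed_A,\embed_B})=\trace(P D_{\embed_B,\embed_A})$), and minimizing over $P\in\DStoch^n$ versus $P^\top\in\DStoch^n$ gives the same value. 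Combined with the previous steps, $d_S$ is a pseudometric over $\Symmetric^n\times\Psi^n_{\tilde{\Omega}}$. For the second claim, over $\reals^{n\times n}\times\Psi^n_{\tilde{\Omega}}$ with an arbitrary entry-wise or operator norm, symmetry may fail, but $d_S$ still satisfies \eqref{non-neg}, \eqref{weakdef}, and \eqref{triangle}; consequently its symmetric extension $\bar d_S(x,y)=d_S(x,y)+d_S(y,x)$ is non-negative, symmetric by construction, satisfies $\bar d_S(x,x)=0$, and inherits the triangle inequality since $\bar d_S(x,z)\le[d_S(x,y)+d_S(y,z)]+[d_S(z,y)+d_S(y,x)]=\bar d_S(x,y)+\bar d_S(y,z)$, so it is a pseudometric.

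The main obstacle is the composition inequality for the matching cost displayed above: it is the single place where both the metric (triangle) structure of $\tilde d$ and the doubly-stochastic normalization of $P',P''$ are indispensable, and --- as remarked after Theorem~\ref{thm:permutation_linear} --- it would break for an arbitrary linear term in place of $\trace(P^\top D_{\embed_A,\embed_B})$. Everything else is a bookkeeping extension of the proof of Theorem~\ref{thm:dstochastic}.
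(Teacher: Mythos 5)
Your proof is correct and follows essentially the same route as the paper: the paper's appendix establishes the same composition inequality for the trace term (Lemma~\ref{newtrianglelemma}, using closure under multiplication, contractivity, and double stochasticity), the same weak-property check at $P=I$ (Lemma~\ref{newweakprop}), and the same transpose-invariance argument for symmetry on $\Symmetric^n$ (Lemma~\ref{newsymmetry1}), then passes to $\bar d_S$ for the asymmetric case. You inline what the paper factors into lemmas, but the mathematical content — in particular the key inequality $\trace((P'P'')^\top D_{\psi_A,\psi_C})\le\trace(P'^\top D_{\psi_A,\psi_B})+\trace(P''^\top D_{\psi_B,\psi_C})$ via the triangle inequality for $\tilde d$ and row/column normalization — is identical.
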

Adding the linear term \eqref{embeddingdistance} in $d_S$ has significant practical advantages. Beyond expressing exogenous attributes, a linear term involving colors, combined with a Kronecker distance, translates into \emph{hard} constraints: any permutation attaning a finite objective value \emph{must} map nodes in one graph to nodes of the same color. Theorem~\ref{thm:dstochastic_linear} therefore implies that such constraints can thus be added to the optimization problem, while maintaining the metric property.   
In practice, as the number of variables in optimization problem \eqref{minnorm} is $n^2$, incorporating such hard constraints can significantly reduce the problem's computation time; we illustrate this in the next section.  Note that  adding \eqref{embeddingdistance} to $d_{\Ortho^n}$ does \emph{not} preserve the metric propery.

\section{Experiments}
\label{sec:experiments}

\begin{figure*}[!t]
\begin{center}
\includegraphics[trim = 0mm 0mm 0cm 0mm, clip,width=0.66\columnwidth]{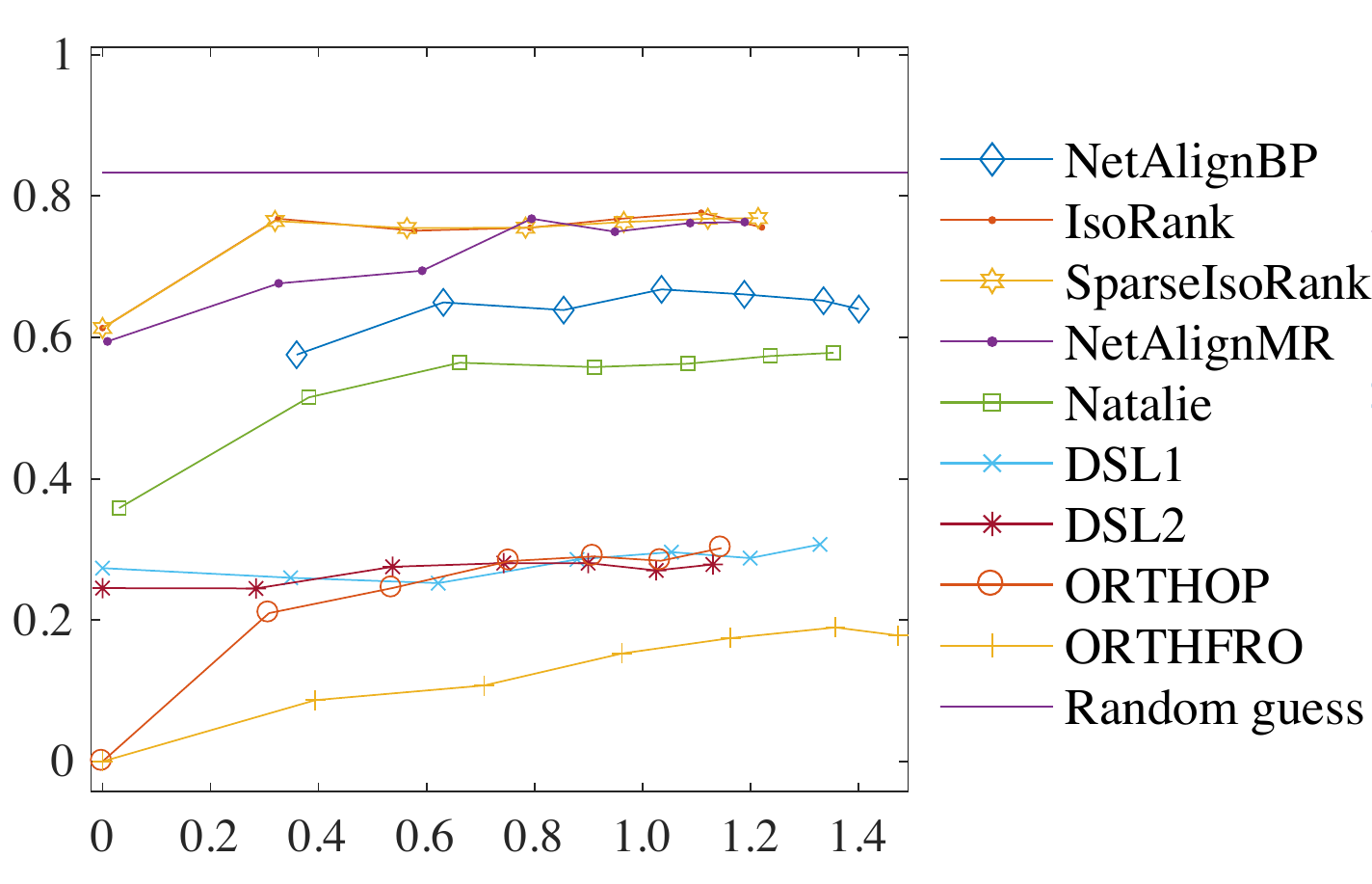}
\put(-53,08){\tiny \color{black} $ \times 10^{-2}$}
\put(-125,-03){\tiny \color{black}  Fraction of TIVs}
\put(-165,5){\tiny \color{black}  \rotatebox{90}{Fraction of Misclassified Graphs}}
\put(-125,100){\tiny (a) Effect of TIVs}
\includegraphics[trim = 5cm 9.3cm 5cm 9cm, clip,width=4.4cm]{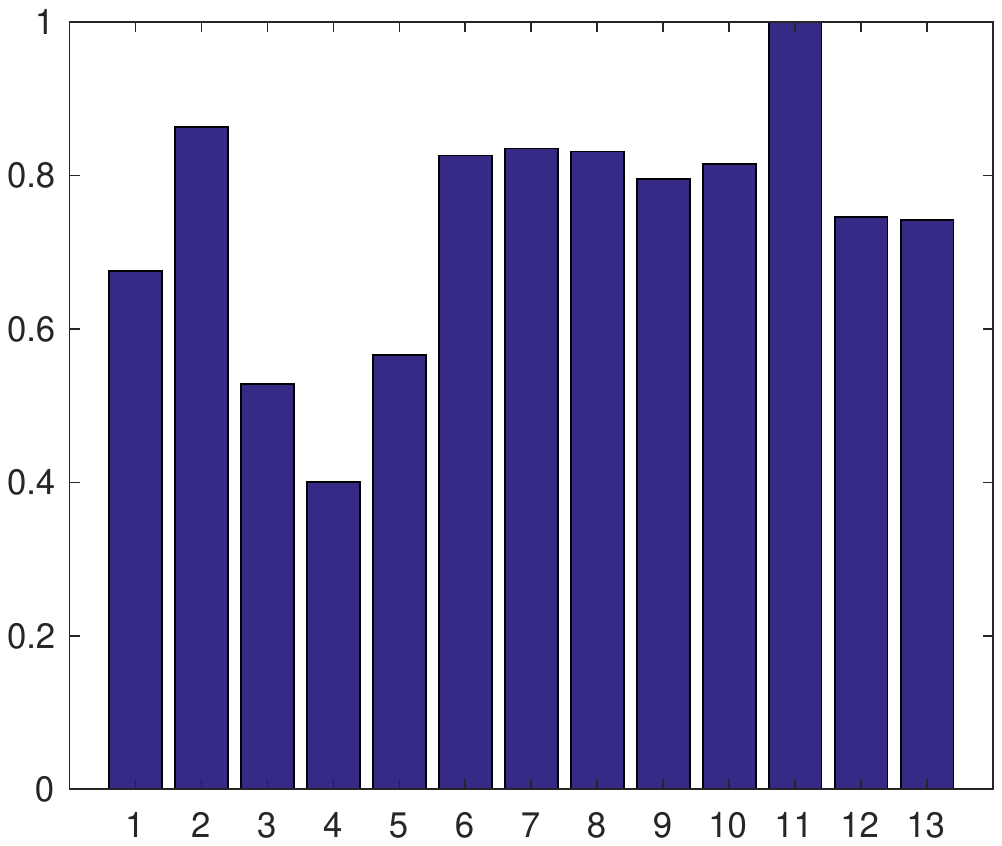}
\put(-110,100){\tiny (b) Cosine Similarity to \EXACT}
\includegraphics[trim = 9.5mm 0mm 12mm 0mm, clip,width=4.3cm]{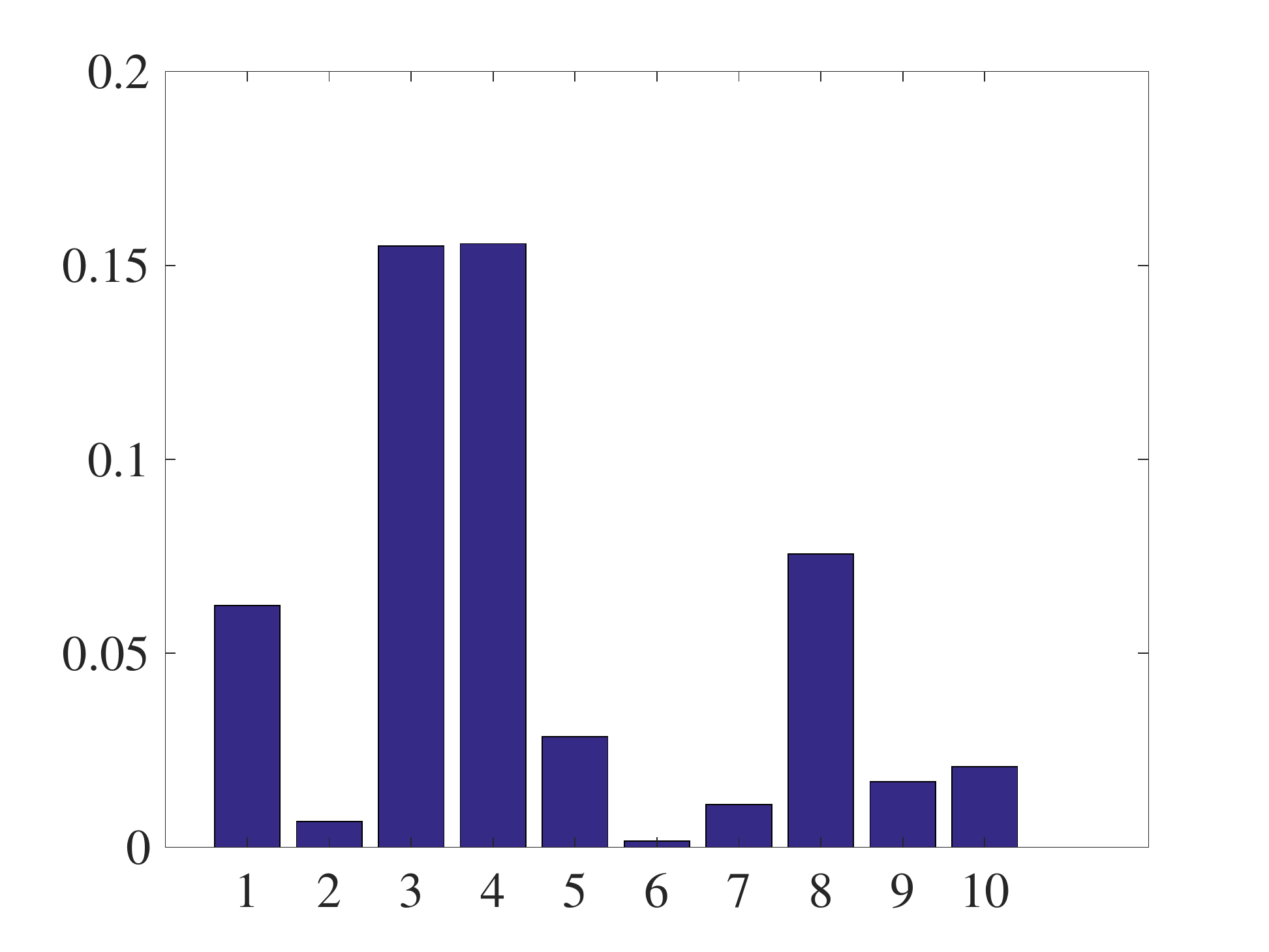}
\put(-120,100){\tiny (c) NN Graph vs.~NN Graph of \EXACT}
\put(-0,89){\tiny 1 - \InnerDSLtwo}
\put(-0,84){\tiny 2 - \NetAlignBP}
\put(-0,79){\tiny 3 - \IsoRank}
\put(-0,74){\tiny 4 - \SparseIsoRank}
\put(-0,69){\tiny 5 - \NetAlignMR}
\put(-0,64){\tiny 6 - \Natalie}
\put(-0,59){\tiny 7 - \DSLone}
\put(-0,54){\tiny 8 - \DSLtwo}
\put(-0,49){\tiny 9 - \InnerPerm}
\put(-0,44){\tiny 10 - \InnerDSLone}
\put(-0,39){\tiny 11 - \EXACT}
\put(-0,34){\tiny 12 - {\bf ORTHOP}}
\put(-0,28){\tiny 13 - {\bf ORTHFR}}
\hspace*{4em}
\caption{{ {\em (a)} Effect of introducing TIVs on the performance of different algorithms on the clustering experiment of Figure \ref{fig:clustering_exp}(a) when using the Ward method. {\em (b)} Cosine similarity between the Laplacian of distances produced by each algorithm and the one by
\EXACT. {\em (c)} Distance between nearest neighbor (NN) graphs induced by different algorithms and NN graph induced by \EXACT.}}\vspace*{-2em}
\label{fig:effect}
\end{center}
\end{figure*}

\noindent\textbf{Graphs.} We use \emph{synthetic graphs} from six classes summarized in the table in Fig.~\ref{fig:clustering_exp}(d). In addition, we use a dataset of \emph{small graphs}, comprising all $853$ connected graphs of $7$ nodes~\citep{small7graphs}.
Finally, we use a \emph{collaboration graph} with $5242$ nodes and $14496$ edges representing author collaborations
\citep{stanfordgraphs}.

\begin{table}[!t]
{\tiny
\centering
 \begin{tabular}{||c p{0.69\columnwidth}||} 
 \hline
\multicolumn{2}{||c||}{(Non-metric) Distance Score Algorithms} \\
 \hline\hline
  NetAlignBP & Network Alignment using Belief Propagation \cite{bayati2009algorithms,bayatticode} \\ 
 \hline
  IsoRank & Neighborhood Topology Isomorphism using Page Rank \cite{singh2007pairwise,bayatticode} \\
 \hline
 SparseIsoRank &Neighborhood Topology Sparse Isomorphism using Page Rank \cite{bayati2009algorithms,bayatticode} \\
 \hline
 InnerPerm & Inner Product Matching with Permutations \cite{lyzinski2016graph}  \\
 \hline
 InnerDSL1 & Inner Product Matching with  Matrices in $\DStoch^n$ and entry-wise 1-norm \cite{lyzinski2016graph}  \\
 \hline
InnerDSL2 &  Inner Product Matching with  Matrices in $\DStoch^n$ and Frobenius norm \cite{lyzinski2016graph}  \\
 \hline
 NetAlignMR & Iterative Matching Relaxation \cite{klau2009new,bayatticode}  \\
 \hline
Natalie (V2.0) & Improved Iterative Matching Relaxation  \cite{el2015natalie,nataliecode}  \\ 
 \hline
 \hline
\multicolumn{2}{||c||}{Metrics from our Family \eqref{minnorm}} \\
\hline
\hline
EXACT & Chemical Distance via brute force search over GPU  \\
 \hline
 DSL1&  Doubly Stochastic Chemical Distance $d_{\DStoch^n}$ with entry-wise 1-norm  \\
 \hline
 DSL2& Doubly Stochastic Chemical Distance $d_{\DStoch^n}$ with Frobenius norm  \\
 \hline
 ORTHOP&  Orthogonal Relaxation of Chemical Distance $d_{\Ortho^n}$ with operator 2-norm  \\
 \hline
 ORTHFR& Orthogonal Relaxation of Chemical Distance $d_{\Ortho^n}$ with Frobenius norm   \\
 \hline
\end{tabular}
}
\caption{Competitor Distance Scores \& Our Metrics}\label{table:comp}

\end{table}

\begin{figure}[!t]
\begin{minipage}[c]{\columnwidth}
{\centering
{
\newcolumntype{C}[1]{>{\!\centering}m{#1}}
\tiny
\begin{tabular}{||C{0.1cm} C{0.6cm} C{1.2cm} c||}
\hline
$k$& $\|P\|_0$ & $\|\!AP\!\!-\!\!PA\!\|_0$ & $\tau$\\
\hline
1 & 3,747,960 & 100.569 & 133s\\
 \hline
2 & 239,048  & 3,004 & 104s\\
 \hline
3 & 182,474  &2,036& 136s\\
 \hline
4  &  182,016  &2,030& 169s\\
 \hline
5  & 182,006  &2,030& 200s\\
\hline
\end{tabular}
\put(-120,27){\footnotesize (a) Coloring Constraints}\vspace*{3em}
}{
\includegraphics[ trim = 3mm 70mm 30mm 30mm,  width=0.40\columnwidth]{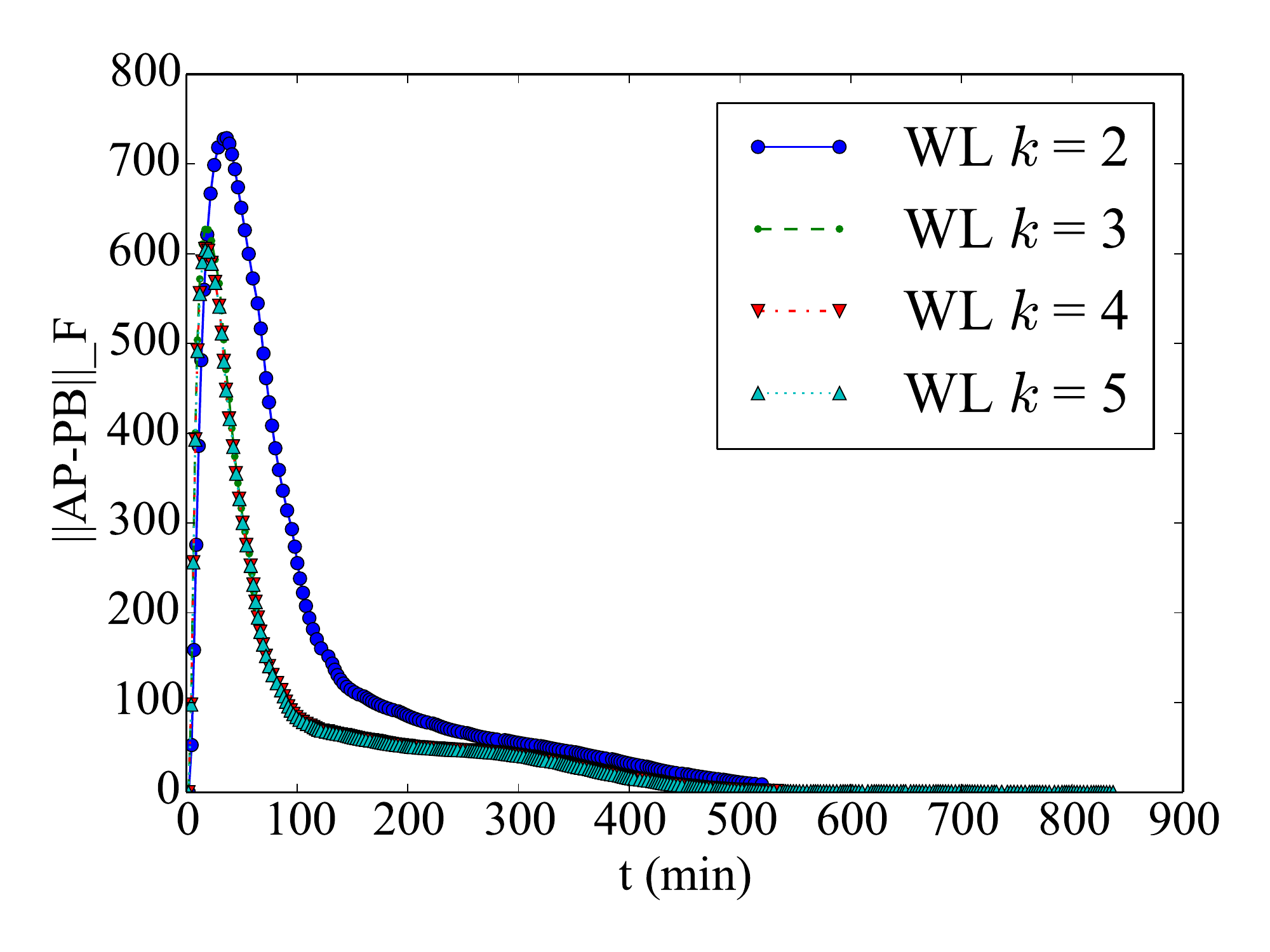}
\put(-90,45){\footnotesize (b) Convergence of ADMM}
}\vspace*{-1em}
}
\end{minipage}
\caption{{\em (a)}  Effect of coloring/hard constraints on the numbers of variables ($\|P\|_0$) and terms of objective ($\|AP-PA\|_0$) using $k$ iterations of the WL coloring algorithm. The last column shows the execution time of WL on a 40 CPU machine using Apache Spark  \citep{zaharia2010spark}.  {\em (b)} Convergence of ADMM algorithm \cite{boyd2011distributed} computing \DSLtwo on two copies of the collaboration graph as a function of time, implemented using Apache Spark  \citep{zaharia2010spark} on a 40 CPU machine.}\vspace*{-1em} \label{table:admm}
\end{figure}

\noindent\textbf{Algorithms.} We compare our metrics to several competitors outlined in Table~\ref{table:comp} \alt{(see also \citep{arxivthis} for additional details).}{(see also Appendix~\ref{app:alg}).}
 All receive only two unlabeled undirected simple graphs $A$ and $B$ and output a matching a matrix $\hat{P}$ either in
$\DStoch^n$ or in $\Permutations^n$ estimating $P^{*}$. If $\hat{P} \in \Permutations^n$,   we compute $\|A\hat{P} - \hat{P}B\|_1$.
If $\hat{P} \in \DStoch^n$, then we compute both $\|A\hat{P} -\hat{P}B\|_1$ and $\|A\hat{P} - \hat{P}B\|_F$; all norms are entry-wise. We also implement our two relaxations $d_{\DStoch}$ and $d_{\Ortho^n}$, for two different matrix norm combinations.

\noindent\textbf{Clustering Graphs.} 
The difference between our metrics and non-metrics is  striking when clustering graphs. This is illustrated by the clustering experiment shown in Fig.~\ref{fig:clustering_exp}(a). Graphs of size $n=50$ from the 6 classes in Fig.~\ref{fig:clustering_exp}(d) are clustered together through hierarchical agglomerative clustering. We compute distances between them using nine different algorithms; only the distances in our family (DSL1, DSL2, ORTHOP, and ORTHFR) are metrics. The quality of clusters induced by our metrics are far superior than clusters induced by non-metrics; in fact, \textbf{ORTHOP} and \textbf{ORTHFR} can lead to no misclassifications. This experiment strongly suggests our produced metrics correctly capture the topology of the metric space between these larger graphs.

\noindent\textbf{Triangle Inequality Violations (TIV).}
Given graphs $A$, $B$ and $C$ and a distance $d$, a TIV occurs when $d(A,C) > d(A,B) +d(B,C)$. Being metrics, none of our distances induce TIVs; this is not the case for the remaining algorithms in Table~\ref{table:comp}.  Fig.~\ref{fig:clustering_exp}(b) and (c)  show the TIV fraction across the synthetic graphs of Fig.~\ref{fig:clustering_exp}(d), while Fig.~\ref{fig:clustering_exp}(e) shows the fraction of TIVs found on the $853$ small graphs ($n=7$).   
 \NetAlignMR also produces no TIVs on the small graphs, but it does induce TIVs in synthetic graphs. We observe that it is easier to find TIVs when graphs are close:  in synthetic graphs, TIVs abound for $n=10$. No algorithm performs well across all categories of graphs.

\noindent\textbf{Effect of TIVs on Clustering.} Next, to investigate the effect of TIVs on clustering, we artificially introduced triangle inequality violations into the pairs of distances between graphs. We then re-evaluated clustering performance for hierarchical agglomerative clustering
using the \emph{Ward} method, which performed best in Fig.~\ref{fig:clustering_exp}(a). Fig.~\ref{fig:effect}(a) shows the 
fraction of misclassified graphs as the fraction
of TIVs introduced increases. To incur as small a perturmbation on distances as possible, we introduce TIVs as follows:  For every three graphs, $A,B,C$,
with probability $p$, we set $d(A,C)=d(A,B)+d(B,C)$. Although this does not introduce a TIV w.r.t.~$A$,$B$, and $C$, this distortion does introduce TIVs w.r.t. other triplets involving $A$ and $C$. We repeat this 20 times for each algorithm and each value of $p$, and compute the average fraction of TIVs, shown in the $x$-axis,
and the average fraction of misclassified graphs, shown in the $y$-axis. As little as $1\%$ TIVs significantly deteriorate clustering performance.
We also see that, even after introducing TIVs, clustering based on metrics
outperforms clustering based on non-metrics.

\noindent\textbf{Comparison to Chemical Distance.} 
We compare how different distance scores relate to the chemical distance \EXACT through two experiments on the small graphs (computation on larger graphs is prohibitive).
In Figure \ref{fig:effect}(b), we compare the distances between small graphs with $7$ nodes produced by the different algorithms and \EXACT using the DISTATIS method of \cite{abdi2005distatis}. Let $D\in \reals_+^{835\times 835}$ be the matrix of distances between graphs under an algorithm. DISTATIS computes the normalized Laplacian of this matrix, given by $L=-UDU/\|UDU\|_2$  where $U = I - \frac{{\bf 1}{\bf 1}^\top}{n}$. The DISTATIS score is the cosine similarity of such Laplacians (vectorized). We see that our metrics produce distances attaining high similarity with \EXACT, though   \NetAlignBP has the highest similarity. We measure proximity to \EXACT with an additional test. Given $D$, we compute the nearest neighbor (NN) meta-graph by connecting a graph in $D$ to every graph at distance less than its average distance to other graps. This results in a (labeled) meta-graph, which we can compare to the NN meta-graph induced by other algorithms, measuring the fraction of distinct edges.  Fig.~\ref{fig:effect}(c) shows that our algorithms perform quite well, though \Natalie yields the smallest distance to \EXACT.

\noindent\textbf{Incorporating Constraints.}
Computation costs  can be reduced through metric embeddings, as in \eqref{addlocal}.
To show this, we produce a copy of the $5242$ node collaboration graph with permuted node labels.
We then run the WL algorithm \alt{\citep{weisfeiler1968reduction}}{(see Appendix~\ref{sec:WL})} to produce structural colors, which induce coloring constraints on $P\in \DStoch^n$.  
 The support of $P$ (i.e., the number of variables in the optimization \eqref{minnorm}), the support of $AP-PA$ (i.e., the number of non-zero summation terms in the objective of \eqref{minnorm}), as well as the execution time $\tau$ of the WL algorithm, are summarized in  Fig.~\ref{table:admm}(b). 
The original unconstrained problem involves $5242^2\approx 27.4$M variables. However, after using WL and induced costraints, the effective dimension of
the optimization problem \eqref{minnorm} reduces considerably.
This, in turn, speeds up convergence time, shown in Fig.~\ref{table:admm}(b): including the time to compute constraints, a solution is found 110 times faster after the introduction of the constraints.

\vspace{-0.4cm}
\section{Conclusion}
\label{sec:conclusion}

Our work suggests that incorporating soft and hard constraints has a great potential to further improve the efficiency of our metrics. In future work, we intend to investigate and characterize the resulting equivalence classes under different soft and hard constraints and to quantify these gains in efficiency, especially in parallel implementations like ADMM. Determining the necessity of the conditions used in proving that $d_S$ is a metric is also an open problem. 
 
\section*{Acknowledgements}

The authors gratefully acknowledge the support of the National Science Foundation (grants IIS-1741197,IIS-1741129) and of the National Institutes of Health (grant 1U01AI124302).   

\begin{footnotesize}

\bibliographystyle{plainnat}
\end{footnotesize}

\newpage
\appendix
\section{Proof of Lemmas~\ref{trianglelemma}--\ref{symmetry1}}\label{app:manyproofs}
\subsection{Proof of Lemma~\ref{trianglelemma}}\label{sec:proofoflemmatrianglelemma}
Consider $P' \in \argmin_{P\in S} \|AP-PB \|$, and $P'' \in \argmin_{P\in S} \|BP-PC\|$. Then, from closure under multiplication, $P'P''\in S$. Hence,
\begin{align*}
d_S(A,C) &\leq \| AP'P'' - P'P'' C\| \\ &\leq \| AP'P'' - P'BP'' \|+ \|P'BP''- P'P'' C\|\\
&=\|(A P' - P' B) P''\|  + \|P' (BP''- P''C) \| \\
&\leq \|A P' - P' B\| +  \|BP''- P''C\| 
\end{align*}
where the last inequality follows from the fact that $P',P''$ are contractions.\qedh

\subsection{Proof of Lemma~\ref{symmetry2}}\label{sec:proofoflemmasymmetry2}
Observe that property (b)  implies that, for all $P\in S$, $P$ is invertible and $P^{-1}\in S$. Hence,
$\|AP \!-\!PB\| \!=\!\| P (P^{-1}\!A\! -\!BP^{-1})P\| \!\leq\!  \|BP^{-1} \!-\!P^{-1} A \|,$  as $P$ is a contraction w.r.t~$\|\cdot\|$. We can similarly show that
$\|BP^{-1} -P^{-1} A \| \leq \|AP-PB\|,$
hence $\|AP-PB\|=\|BP^{-1} -P^{-1}A\|.$ As $S$ is closed under inversion, $\min_{P\in S} f(P)=\min_{P:P^{-1}\in S}f(P)$, so 
$d_S(A,B) = \min_{P\in S} \| BP^{-1} -P^{-1}A\| = \min_{P:P^{-1} \in S}  \| BP^{-1} -P^{-1}A\|=  \min_{P\in S} \| BP -P A\|  = d_S(B,A).$\qedh

\subsection{Proof of Lemma~\ref{weakprop}}\label{sec:proofoflemmaweakprop}
If $I\in S$, then $0\leq d_S(A,A)\leq \|AI-IA\|=0$. \qedh

\subsection{Proof of Lemma~\ref{contractive_permutations}}\label{sec:proofoflemmacontractivepermutations}
Observe first that all vector $p$-norms are invariant to permutations a vector's entries; hence, for any vector $x\in\reals^d$, if $P\in \Permutations^n$, $\|Px\|_p=\|x\|_p$.
Hence, if $\|\cdot\|$ is an operator $p$-norm,
$\|P\|=1,$ \text{for all}~$P\in S.$
Every operator norm is  \emph{submultiplicative}; as a result
$\|P A\|\leq \|P\| \|A\|=\|A\|$ and, similarly,  $\| A P\|\leq\|A\|$, 
so  the lemma follows for operator norms. On the other hand, if $\| \cdot\|$ is an entry-wise norm, then $\|A\|$  is invariant to permutations of either $A$'s rows or columns. Matrices $PA$ and $AP$ precisely amount to such permutations, so
$\|PA\|=\|AP\|=\|A\|$
and the lemma follows also for entrywise norms.\qedh

\subsection{Proof of Lemma~\ref{contractive_ortho}}\label{sec:proofoflemmacontractive_ortho}
Any $U\in \Ortho^n$ is an orthogonal matrix; hence, $\|U\|_2=\|U\|_F=1$. Both norms are submultiplicative: the first as an operator norm, the second from the Cauchy-Schwartz inequality. Hence, for $U\in \Ortho^n$, we have
$\|UA\|\leq \|U\| \|A\| = \|A\|.$\qedh

\subsection{Proof of Lemma~\ref{contractive_dstoch}}\label{sec:proofoflemma}
By the Birkoff-con Neumann theorem \citep{birkhoff1946three}, $\DStoch^n=\conv(\Permutations^n)$. Hence, for any $W\in \DStoch^n$ there exist $P_i\in \Permutations^n$, $\theta_i>0$, $i=1,\ldots,k$, such that
$W = \sum_{i =1}^k \theta_i P_i$ and $\sum_{i=1}^k\theta_i =1.$
 Both operator and entrywise $p$-norms are convex functions; hence, by Jensen's inequality, for any $A\in \reals^{n\times N}$:
$\| W A\| \leq\textstyle \sum_{i=1}^k\theta_i \|P_iA\| \leq  \textstyle\sum_{i=1}^k\theta_i\|A\| =\|A\|$
where the last ineqality follows  by Lemma~\ref{contractive_permutations}. The statement $\|AW\|\leq \|A\|$ follows similarly.\qedh

\subsection{Proof of Lemma~\ref{symmetry1}}\label{sec:proofoflemmasymmetry1}
By transpose invariance and the symmetry of $A$ and $B$, we have that:
$\|AP-PB\| = \|B P^\top-P^\top A\|.$ Moreover, as $S$ is closed under transposition, $\min_{P\in S}f(P)=\min_{P^\top \in S} f(P)$. Hence, 
$d_S(A,B) = \min_{P\in S} \|BP^\top -P^\top A\| = \min_{P^\top \in S}  \|BP^\top -P^\top A\| = d_S(B,A).$ \qedh

\section{Proof of Theorems~\ref{thm:permutation_linear} and~\ref{thm:dstochastic_linear}}

We begin by establishing conditions under which $d_S$ satisfies the triangle inequality \eqref{triangle}. We note that, in contrast to Lemma~\ref{trianglelemma}, we require the additional condition that $S\subseteq \DStoch^n$, which is not satisfied by $\Ortho^n$.
\begin{lemma}\label{newtrianglelemma} Given a norm $\|\cdot\|$, suppose that $S$ is (a) contractive w.r.t.~$\|\cdot\|$,
(b)  closed under multiplication, and   (c) is a subset of $\DStoch^n$, i.e., contains only doubly stochastic matrices. Then, for any $(A,\psi_A),(B,\psi_B),(C,\embed_C)$  in  $\reals^{n\times n} \times \Psi_{\tilde{\Omega}}$,
$d_S((A,\embed_A),(C,\embed_B)) \leq d_S((A,\embed_A),(B,\embed_B)) +d_S ((B,\embed_B),(C,\embed_C)).$
\end{lemma}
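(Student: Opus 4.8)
The plan is to mimic the proof of Lemma~\ref{trianglelemma}, but carry the linear embedding term through the same chain of inequalities, using the fact that elements of $S$ are doubly stochastic to control the trace term. First I would fix optimal matrices $P'\in\argmin_{P\in S}\big[\|AP-PB\|+\trace(P^\top D_{\embed_A,\embed_B})\big]$ and $P''\in\argmin_{P\in S}\big[\|BP-PC\|+\trace(P^\top D_{\embed_B,\embed_C})\big]$. By closure under multiplication, $P'P''\in S$, so it is a feasible point for the minimization defining $d_S((A,\embed_A),(C,\embed_C))$. Hence
\begin{align*}
d_S((A,\embed_A),(C,\embed_C)) \leq \|AP'P''-P'P''C\| + \trace\big((P'P'')^\top D_{\embed_A,\embed_C}\big).
\end{align*}
The norm part is handled exactly as in Lemma~\ref{trianglelemma}: insert and subtract $P'BP''$, apply the triangle inequality for $\|\cdot\|$, and use that $P',P''$ are contractions (condition (a)) to get $\|AP'P''-P'P''C\|\leq \|AP'-P'B\|+\|BP''-P''C\|$.

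The new ingredient is bounding the trace term. The key observation is that $D_{\embed_A,\embed_C}$ is a matrix of pairwise distances in the metric space $(\tilde\Omega,\tilde d)$, so it satisfies an entrywise triangle inequality: for every $u,w\in[n]$ and every intermediate node $v$, $\tilde d(\embed_A(u),\embed_C(w)) \leq \tilde d(\embed_A(u),\embed_B(v)) + \tilde d(\embed_B(v),\embed_C(w))$. I would write the trace as $\trace((P'P'')^\top D_{\embed_A,\embed_C}) = \sum_{u,w}(P'P'')_{u,w}\,\tilde d(\embed_A(u),\embed_C(w))$ and expand $(P'P'')_{u,w}=\sum_v P'_{u,v}P''_{v,w}$. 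Since both $P'$ and $P''$ are doubly stochastic (condition (c)), all coefficients $P'_{u,v}P''_{v,w}$ are nonnegative and, for each fixed pair $(u,w)$, $\sum_v P'_{u,v}P''_{v,w}=(P'P'')_{u,w}$; more usefully, summing over $w$ gives $\sum_w P''_{v,w}=1$ and summing over $u$ gives $\sum_u P'_{u,v}=1$. Applying the entrywise triangle inequality inside the double sum and then using these row/column sum identities yields
\begin{align*}
\trace\big((P'P'')^\top D_{\embed_A,\embed_C}\big) \leq \sum_{u,v}P'_{u,v}\tilde d(\embed_A(u),\embed_B(v)) + \sum_{v,w}P''_{v,w}\tilde d(\embed_B(v),\embed_C(w)) = \trace(P'^\top D_{\embed_A,\embed_B}) + \trace(P''^\top D_{\embed_B,\embed_C}).
\end{align*}

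Combining the two bounds, the right-hand side becomes $\big[\|AP'-P'B\|+\trace(P'^\top D_{\embed_A,\embed_B})\big] + \big[\|BP''-P''C\|+\trace(P''^\top D_{\embed_B,\embed_C})\big]$, which by the choice of $P'$ and $P''$ equals $d_S((A,\embed_A),(B,\embed_B)) + d_S((B,\embed_B),(C,\embed_C))$, as desired. The main obstacle is the bookkeeping in the trace estimate: one must be careful that the doubly stochastic structure of \emph{both} $P'$ and $P''$ is used, so that after invoking the entrywise triangle inequality the leftover marginals collapse to exactly the two embedding-distance traces with coefficients $P'$ and $P''$ respectively, with no stray factors. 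Note this is precisely where the hypothesis $S\subseteq\DStoch^n$ enters and why the argument does not extend to $\Ortho^n$. Once Lemma~\ref{newtrianglelemma} is in place, Theorems~\ref{thm:permutation_linear} and~\ref{thm:dstochastic_linear} follow by combining it with the symmetry and weak-definiteness arguments exactly as in the proofs of Theorems~\ref{thm:permutation} and~\ref{thm:dstochastic} (the linear term vanishes when $A=B$ and $\embed_A=\embed_B$ since then $D_{\embed_A,\embed_A}$ has zero diagonal and $I\in S$, and it is symmetric under transposition/inversion because $D_{\embed_B,\embed_A}=D_{\embed_A,\embed_B}^\top$).
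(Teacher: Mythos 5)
Your proposal is correct and follows essentially the same approach as the paper's proof: fix optimal $P',P''$, use closure under multiplication to make $P'P''$ feasible, split the norm term as in Lemma~\ref{trianglelemma}, and bound the trace term by applying the entrywise triangle inequality of $\tilde d$ and then collapsing the extra sums using the row/column sum identities of doubly stochastic matrices. Your remark that the double stochasticity of both $P'$ and $P''$ is exactly where hypothesis (c) enters, and why the argument fails for $\Ortho^n$, matches the paper's reasoning precisely.
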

\begin{proof}
Consider $$P' \in \argmin_{P\in S} \left(\|AP-PB \|+\trace \left(P^\top D_{\embed_A,\embed_B}\right)\right), $$
 and $$P'' \in \argmin_{P\in S} \left(\|BP-PC\| +\trace \left( P^\top D_{\embed_B,\embed_C}\right)\right).$$ Then, from closure under multiplication, $P'P''\in S$. 
We have that
\begin{align*}
\begin{split}d_S((A,\embed_A),(C,\embed_C)) \leq
 \| AP'P'' - P'P'' C\| \\+  \trace \left[(P'P'')^\top D_{\embed_A\embed_C}\right]\end{split}
\end{align*}
As in the proof of Lemma~\ref{trianglelemma}, we can show that
\begin{align*}
 \| AP'P'' - P'P'' C\|& \leq  \|A P' - P' B\|  + \|BP''- P''C \|
\end{align*}
using the fact that both $P'$ and $P''$ are contractions, while
\begin{align*}
\trace\big[(P'&P'')^\top D_{\embed_A\embed_C}\big] =  \displaybreak[0]\\&=\sum_{u,v\in [n]} \sum_{k\in[n]} \left( P'_{uk}P''_{kv} \tilde{d}(\embed_A(u),\embed_C(v))) \right)\displaybreak[0]\\
&\leq \sum_{u,v\in [n]} \sum_{k\in[n]}\big[  P'_{uk}P''_{kv}\big( \tilde{d}(\embed_A(u),\embed_B(k))\\&\qquad + \tilde{d}(\embed_B(k),\embed_C(v))  \big)\big]\\
& \text{ (as }\tilde{d}\text{ is a metric, and}P',P''  \text{are non-negative) }\displaybreak[0]\\
&= \sum_{u,k\in [n]} P'_{uk}\ \tilde{d}(\embed_A(u),\embed_B(k))  \sum_{v\in[n]} P''_{kv}\\ 
&\quad + \sum_{k,v\in [n]}P''_{kv} \tilde{d}(\embed_B(k),\embed_C(v))  \sum_{u\in[n]} P'_{uk}\displaybreak[0]\\
&\leq \trace \left((P')^\top D_{\embed_A,\embed_B}\right) + \trace \left((P'')^\top  D_{\embed_B,\embed_C}\right),
\end{align*}
where the last inequality follows as both $P,P^\top$ are $\|\cdot\|_1$-norm bounded by 1 for every $P\in S$. 
\end{proof}
The weak property \eqref{weakdef} is again satisfied provided the identity is included in $S$.
\begin{lemma}\label{newweakprop} If $I\in S$, then $d_S( (A,\embed_A),(A,\embed_A))=0$ for all $A\in \reals^{n\times n}$.
\end{lemma}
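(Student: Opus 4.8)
The plan is to mirror the argument for Lemma~\ref{weakprop}, now accounting for the extra linear term. Since $I\in S$, the feasible set of the minimization defining $d_S((A,\psi_A),(A,\psi_A))$ in \eqref{addlocal} contains $P=I$, so the optimal value is at most the objective evaluated at $P=I$. For this choice, the norm term vanishes because $AI-IA=0$, and the linear term becomes $\trace(I^\top D_{\psi_A,\psi_A})=\trace(D_{\psi_A,\psi_A})=\sum_{u\in[n]}\tilde d(\psi_A(u),\psi_A(u))$, which is $0$ since $\tilde d$ is a metric (indeed a pseudometric suffices) and hence $\tilde d(s,s)=0$ for every $s\in\tilde\Omega$. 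Thus $d_S((A,\psi_A),(A,\psi_A))\le 0$.

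For the reverse inequality, observe that every term in the objective of \eqref{addlocal} is non-negative: the matrix norm is non-negative by definition, and $\trace(P^\top D_{\psi_A,\psi_A})=\sum_{u,v}P_{uv}\,\tilde d(\psi_A(u),\psi_A(v))\ge 0$ because the entries of $D_{\psi_A,\psi_A}$ are non-negative and the matrices $P\in S$ of interest (permutation matrices in Theorem~\ref{thm:permutation_linear}, doubly stochastic matrices in Theorem~\ref{thm:dstochastic_linear}) have non-negative entries. Hence $d_S((A,\psi_A),(A,\psi_A))\ge 0$, and combining the two bounds gives $d_S((A,\psi_A),(A,\psi_A))=0$.

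There is essentially no obstacle here; the only point requiring a word of care is the non-negativity used in the lower bound, which is why the statement is applied in the context where $S$ consists of entrywise non-negative matrices. \qedh
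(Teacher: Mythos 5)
Your proof is correct and follows essentially the same strategy as the paper's: plug in $P=I$ (feasible since $I\in S$) to bound $d_S$ above by $\|AI-IA\|+\sum_u\tilde d(\psi_A(u),\psi_A(u))=0$, and combine with the lower bound $d_S\ge 0$. The one place you go further than the paper is in spelling out why the lower bound holds: the paper simply writes $0\le d_S$ without comment, while you correctly observe that non-negativity of the linear term requires the matrices in $S$ to be entrywise non-negative (true for $\Permutations^n$ and $\DStoch^n$, the cases in which Lemma~\ref{newweakprop} is invoked). That is a fair point worth being explicit about; as literally stated the lemma leans on this unstated hypothesis, and your remark flags it cleanly.
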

\begin{proof}
Indeed, $0\leq d_S((A,\embed_A,(A,\embed_A))\leq \|AI-IA\|+\sum_{u\in [n]}\tilde{d}(\embed_A(u),\embed_A(u)) =0$.
\end{proof} 
To attain symmetry over $\Omega=\reals^{n\times n}$, we again rely on closure under inversion, as in Lemma~\ref{newsymmetry2}; nonetheless, in contrast to Lemma~\ref{newsymmetry2}, due to the linear term, we also need to assume orthogonality of $S$.
\begin{lemma}\label{newsymmetry2} 
Given a norm $\|\cdot\|$, suppose that $S$ (a) is contractive w.r.t.~$\|\cdot\|$,
(b)  is  closed under inversion, and (c) is a subset of $\Ortho^n$, i.e., contains only orthogonal matrices.
Then,    $d_S((A,\embed_A),(B,\embed_B))= d_S((B,\embed_B),(A,\embed_A))  $ for all $ (A,\embed_A),(B,\embed_B) \in \reals^{n\times n} \times  \Psi_{\tilde{\Omega}} $.
\end{lemma}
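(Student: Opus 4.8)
The plan is to mimic the proof of Lemma~\ref{symmetry2}, but to handle the two pieces of the objective in \eqref{addlocal} — the norm term $\|AP-PB\|$ and the linear term $\trace(P^\top D_{\embed_A,\embed_B})$ — separately, showing that each is invariant under the substitution $P \mapsto P^{-1}$ together with swapping the roles of $(A,\embed_A)$ and $(B,\embed_B)$. First I would recall from the proof of Lemma~\ref{symmetry2} that, for any contractive $S$ that is closed under inversion, $\|AP - PB\| = \|P(P^{-1}A - BP^{-1})P\| \le \|BP^{-1} - P^{-1}A\|$, and symmetrically $\|BP^{-1}-P^{-1}A\| \le \|AP-PB\|$, so in fact $\|AP-PB\| = \|BP^{-1}-P^{-1}A\|$; this step uses only hypotheses (a) and (b).

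Next I would treat the trace term. Here is where hypothesis (c), $S \subseteq \Ortho^n$, comes in: for $P \in \Ortho^n$ we have $P^{-1} = P^\top$, so $\trace(P^\top D_{\embed_A,\embed_B}) = \trace((P^{-1})^\top D_{\embed_A,\embed_B})$. I still need to relate $D_{\embed_A,\embed_B}$ to $D_{\embed_B,\embed_A}$: by symmetry of the metric $\tilde{d}$ we have $D_{\embed_B,\embed_A} = D_{\embed_A,\embed_B}^\top$, hence $\trace((P^{-1})^\top D_{\embed_A,\embed_B}) = \trace((P^{-1})^\top D_{\embed_B,\embed_A}^\top) = \trace(P^{-1} D_{\embed_B,\embed_A})$, and since $P^{-1}=P^\top$ again (orthogonality), this equals $\trace((P^{-1})^\top D_{\embed_B,\embed_A})$ after transposing inside the trace — i.e.\ exactly the linear term in the objective of $d_S((B,\embed_B),(A,\embed_A))$ evaluated at the variable $P^{-1}$. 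Combining the two pieces, the whole bracketed expression in \eqref{addlocal} for the pair $((A,\embed_A),(B,\embed_B))$ at $P$ equals the corresponding bracketed expression for $((B,\embed_B),(A,\embed_A))$ at $P^{-1}$.

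Finally, since $S$ is closed under inversion, the map $P \mapsto P^{-1}$ is a bijection of $S$, so minimizing over $P\in S$ is the same as minimizing over $P^{-1}\in S$; therefore $d_S((A,\embed_A),(B,\embed_B)) = d_S((B,\embed_B),(A,\embed_A))$, as claimed. The main obstacle, and the only place real care is needed, is the bookkeeping for the trace term: one must invoke orthogonality ($P^{-1}=P^\top$) at the right moments and use $D_{\embed_B,\embed_A}=D_{\embed_A,\embed_B}^\top$ together with transpose-invariance of the trace, rather than any transpose-invariance of the matrix norm — indeed, this is precisely why the lemma assumes $S\subseteq\Ortho^n$ and not merely closure under transposition as in Lemma~\ref{symmetry1}. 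Everything else is a direct transcription of the argument already given for Lemma~\ref{symmetry2}.
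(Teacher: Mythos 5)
Your overall plan matches the paper's: reuse the norm identity $\|AP-PB\| = \|BP^{-1}-P^{-1}A\|$ from the proof of Lemma~\ref{symmetry2} (which needs only contractiveness and closure under inversion), handle the linear term separately via orthogonality ($P^{-1}=P^\top$) together with the observation $D_{\embed_A,\embed_B}^\top = D_{\embed_B,\embed_A}$, and finish by re-indexing the minimum over $P\mapsto P^{-1}$ using closure under inversion. You also correctly explain \emph{why} the lemma assumes $S\subseteq\Ortho^n$ rather than mere closure under transposition.

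However, two of the intermediate identities in your chain for the trace term are false as written, even though the outer equality you ultimately need is true. Your first step asserts $\trace(P^\top D_{\embed_A,\embed_B}) = \trace\bigl((P^{-1})^\top D_{\embed_A,\embed_B}\bigr)$ ``because $P^{-1}=P^\top$''; but for $P\in\Ortho^n$ one has $(P^{-1})^\top = P$, not $P^\top$, so what you wrote reduces to $\trace(P^\top D)=\trace(PD)$, which fails for generic $P\in\Ortho^n$ and $D$. The correct application of $P^{-1}=P^\top$ gives $\trace(P^\top D_{\embed_A,\embed_B}) = \trace(P^{-1} D_{\embed_A,\embed_B})$. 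Your final step, passing from $\trace(P^{-1}D_{\embed_B,\embed_A})$ to $\trace\bigl((P^{-1})^\top D_{\embed_B,\embed_A}\bigr)$ ``by orthogonality and transposing,'' has the mirror-image defect. The two slips happen to cancel (the endpoints $\trace(P^\top D_{\embed_A,\embed_B})$ and $\trace\bigl((P^{-1})^\top D_{\embed_B,\embed_A}\bigr)$ are indeed equal), so the lemma's conclusion is unharmed, but the chain is not a sequence of true equalities. The clean ordering, as in the paper, is $\trace(P^\top D_{\embed_A,\embed_B}) = \trace(P^{-1} D_{\embed_A,\embed_B}) = \trace\bigl((P^{-1})^\top D_{\embed_A,\embed_B}^\top\bigr) = \trace\bigl((P^{-1})^\top D_{\embed_B,\embed_A}\bigr)$: first orthogonality, then transpose-invariance of the trace, then $D_{\embed_A,\embed_B}^\top=D_{\embed_B,\embed_A}$.
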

\begin{proof}
As in the proof of Lemma~\ref{symmetry2}, we can show that contractiveness w.r.t.~$\|\cdot\|$ along with closure under inversion imply that:
$\|AP-PB\|=\|BP^{-1} -P^{-1}A\|.$ 
As $S$ is closed under inversion, $\min_{P\in S} f(P)=\min_{P:P^{-1}\in S}f(P)$ for all $f:S\to\reals$, while orthogonality implies $P^{-1}=P^\top$ for all $P\in S$. Hence, $d_S((A,\embed_A),(B,\embed_B))$ equals:
\begin{align*}
 \min_{P\in S}&\left[ \|AP -P B\| + \trace\left( P^\top D_{\embed_A,\embed_B} \right) \right]
\\&= \min_{P\in S} \left[ \| BP^{-1} -P^{-1}A\|  + \trace\left( P^{-1} D_{\embed_A,\embed_B}   \right) \right]     \\
&= \min_{P\in S} \left[ \| BP^{-1} -P^{-1}A\|  + \trace\left(  \left(P^{-1}\right)^{\top} D_{\embed_A,\embed_B }^\top \right)\right]   \\ 
&= \min_{P^{-1}\in S} \left[ \| BP^{-1} -P^{-1}A\|  + \trace\left(  \left(P^{-1}\right)^{\top} D_{\embed_A,\embed_B }^\top \right)\right]     \\
&= d_S((B,\embed_B),(A,\embed_A)).\qquad\qquad\qquad\qquad\qquad\qedhere
\end{align*}
\end{proof}

Theorem~\ref{thm:permutation_linear} therefore follows from the above lemmas, as $S=\Permutations^n$ contains $I$, it is closed under multiplication and inversion, is a subset of $\DStoch^n\cap \Ortho^n$, and is contractive w.r.t.~all operator and entrywise norms. Theorem~\ref{thm:dstochastic_linear} also follows by using the following lemma, along with Lemmas~\ref{newtrianglelemma} and~\ref{newweakprop}.
\begin{lemma}\label{newsymmetry1} Suppose that  $\| \cdot\|$ is transpose invariant, and  $S$ is closed under transposition. Then, $d_S((A,\embed_A),(B,\embed_B))= d_S((B,\embed_B),(A,\embed_A))  $ for all $ (A,\embed_A),(B,\embed_B) \in \Symmetric^n \times  \Psi_{\tilde{\Omega}} $.
\end{lemma}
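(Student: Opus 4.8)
The plan is to follow the proof of Lemma~\ref{symmetry1} verbatim for the norm part, and then show that the extra linear term in \eqref{addlocal} transforms correctly under transposition. Fix $(A,\embed_A),(B,\embed_B)\in\Symmetric^n\times\Psi_{\tilde\Omega}$ and take an arbitrary $P\in S$. First I would invoke the computation already used in Lemma~\ref{symmetry1}: since $\|\cdot\|$ is transpose invariant and $A=A^\top$, $B=B^\top$, we have $\|AP-PB\| = \|(AP-PB)^\top\| = \|B P^\top-P^\top A\|$.

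Next comes the only genuinely new observation. Because $\tilde d$ is a metric, it is symmetric, so the embedded-distance matrices satisfy $D_{\embed_B,\embed_A} = D_{\embed_A,\embed_B}^\top$; indeed $(D_{\embed_A,\embed_B})_{u,v}=\tilde d(\embed_A(u),\embed_B(v)) = \tilde d(\embed_B(v),\embed_A(u)) = (D_{\embed_B,\embed_A})_{v,u}$. Combining this with invariance of the trace under transposition and the cyclic property, I would write $\trace(P^\top D_{\embed_A,\embed_B}) = \trace\big((P^\top D_{\embed_A,\embed_B})^\top\big) = \trace(D_{\embed_A,\embed_B}^\top P) = \trace(D_{\embed_B,\embed_A} P) = \trace\big((P^\top)^\top D_{\embed_B,\embed_A}\big)$. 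Hence the full objective of \eqref{addlocal} evaluated at $P$ for the pair $\big((A,\embed_A),(B,\embed_B)\big)$ equals its value at $P^\top$ for the pair $\big((B,\embed_B),(A,\embed_A)\big)$.

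Finally, since $S$ is closed under transposition, the map $P\mapsto P^\top$ is a bijection of $S$ onto itself, so minimizing over $P\in S$ is the same as minimizing over $P^\top\in S$; taking $\min_{P\in S}$ of both sides of the identity above yields $d_S((A,\embed_A),(B,\embed_B)) = d_S((B,\embed_B),(A,\embed_A))$, which establishes the symmetry \eqref{triangle}-companion property \eqref{symmetry} over $\Symmetric^n\times\Psi_{\tilde\Omega}$. The only step requiring any thought is the middle one — spotting that symmetry of $\tilde d$ is exactly what makes $D_{\embed_B,\embed_A}=D_{\embed_A,\embed_B}^\top$, so the linear penalty is ``transpose compatible'' in precisely the same way the norm term is; once that is noted, the rest is the same bookkeeping as in Lemma~\ref{symmetry1}, and I do not anticipate a real obstacle.
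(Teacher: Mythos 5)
Your proof is correct and follows the same route as the paper's: apply transpose invariance of the norm plus symmetry of $A,B$ to the norm term, observe that $D_{\embed_A,\embed_B}^\top = D_{\embed_B,\embed_A}$ because $\tilde d$ is a symmetric metric (the paper uses this identity implicitly in the same step), and conclude by reindexing the minimum via $P\mapsto P^\top$ using closure of $S$ under transposition. The only difference is cosmetic — you unwind the trace identity in a few extra steps — so there is nothing to adjust.
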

\begin{proof}
By transpose invariance of $\|\cdot \|$ and the symmetry of $A$ and $B$, we have that:
$\|AP-PB\| = \|B P^\top-P^\top A\|.$ Moreover, as $S$ is closed under transposition, $\min_{P\in S}f(P)=\min_{P^\top \in S} f(P)$ for any $f:S\to\reals$. Hence, $d_S((A,\embed_A),(B,\embed_B))$ equals
\begin{align*}
   \min_{P\in S}&\left[ \|AP -P B\| + \trace\left( P^\top D_{\embed_A,\embed_B} \right) \right]\\
& = \min_{P\in S} \left[ \|BP^\top -P^\top A\| + \trace\left( P D_{\embed_A,\embed_B}^\top \right) \right]\\
&= \min_{P^\top \in S}  \|BP^\top -P^\top A\|  + \trace\left( (P^\top)^\top D_{\embed_B,\embed_A} \right)\\
&= d_S((B,\embed_B),(A,\embed_A)).\qquad\qquad\qquad\qquad\qquad\qedhere
 \end{align*}
\end{proof}

\section{Metric Computation Over the Stiefler Manifold.}\label{sec:umeyama}
In this section, we describe how to compute the metric $d_S$ in polynomial time when $ S = \Ortho^n$ and $\|\cdot\|$ is the Frobenious norm or the operator 2-norm. The  algorithm for the Frobenius norm, and the proof of its correctness, is due to \cite{umeyama1988eigendecomposition}; we include it in this appendix for completeness, along with its extension to the operator norm.

Both cases make use of the following lemma:
\begin{lemma}\label{eqlemma} For any matrix $M\in\reals^{n\times n}$ 
and any matrix $P\in\Ortho^n$
we have that $\|PM\| = \|MP\| = \|M\|$, where $\|\cdot\|$ is either the Frobenius or operator 2-norm.
\end{lemma}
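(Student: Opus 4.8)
The plan is to prove the two norm equalities $\|PM\| = \|M\|$ and $\|MP\| = \|M\|$ separately for each of the two norms, using the defining property $P^\top P = PP^\top = I$ of orthogonal matrices. For the Frobenius norm, the natural route is via the trace identity $\|M\|_F^2 = \trace(M^\top M)$. Then $\|PM\|_F^2 = \trace\bigl((PM)^\top PM\bigr) = \trace(M^\top P^\top P M) = \trace(M^\top M) = \|M\|_F^2$, and similarly $\|MP\|_F^2 = \trace(P^\top M^\top M P) = \trace(M^\top M P P^\top) = \trace(M^\top M)$ using cyclicity of the trace; taking square roots gives the claim. For the operator 2-norm, one uses $\|M\|_2 = \sup_{\|x\|_2 = 1}\|Mx\|_2$ together with the fact (already invoked in the proof of Lemma~\ref{contractive_permutations}) that orthogonal matrices are isometries of the Euclidean vector norm: $\|Px\|_2 = \|x\|_2$ for all $x$. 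Hence $\|PM\|_2 = \sup_{\|x\|_2=1}\|PMx\|_2 = \sup_{\|x\|_2=1}\|Mx\|_2 = \|M\|_2$, where the middle equality applies the vector isometry to the vector $Mx$. For $\|MP\|_2$, substitute $y = Px$; since $P$ is a bijection of the unit sphere onto itself (as $\|Px\|_2 = \|x\|_2$ and $P$ is invertible), $\sup_{\|x\|_2=1}\|MPx\|_2 = \sup_{\|y\|_2=1}\|My\|_2 = \|M\|_2$.

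The key steps, in order: (1) record $P^\top P = P P^\top = I$; (2) for the Frobenius case, expand $\|\cdot\|_F^2$ as a trace and cancel using the orthogonality relations and cyclicity; (3) for the operator case, recall $\|Px\|_2 = \|x\|_2$ from the vector $p$-norm invariance under permutations' generalization to orthogonal matrices, and push the supremum through; (4) handle $\|MP\|$ by the change of variables $y = Px$ on the unit sphere. One could alternatively unify both cases by noting that any $U \in \Ortho^n$ has $\|U\|_2 = \|U\|_F/\sqrt{n}$-type normalization aside, both norms are submultiplicative and $\|U\|_2 = \|U\|_F \cdot$(appropriately) — but this is messier than the direct arguments, so I would keep the two cases separate.

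I do not expect a genuine obstacle here; this is a routine verification and the lemma is stated precisely so that it can be cited in the Umeyama-style computation that follows. The only mild subtlety is being careful that the supremum-change-of-variables argument for $\|MP\|_2$ is valid, which it is because $x \mapsto Px$ maps the Euclidean unit sphere bijectively onto itself. A cleaner alternative for that step: use $\|MP\|_2 = \|(MP)^\top\|_2 = \|P^\top M^\top\|_2$, apply the already-proved left-multiplication identity with the orthogonal matrix $P^\top$ to get $\|P^\top M^\top\|_2 = \|M^\top\|_2 = \|M\|_2$; and the same transpose trick works for the Frobenius norm since $\|\cdot\|_F$ is transpose-invariant. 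I would present the transpose-reduction version, as it halves the work.
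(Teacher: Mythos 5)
Your proof is correct, and the ``transpose-reduction version'' you say you would present is essentially what the paper does: it handles $\|PM\|$ directly via $P^\top P = I$ (through the $\sigma_{\max}(M^\top M)$ characterization for the operator norm and $\trace(M^\top M)$ for Frobenius) and then obtains $\|MP\|$ from $\|MP\| = \|P^\top M^\top\| = \|M^\top\| = \|M\|$. Your use of the $\sup$-over-the-unit-sphere definition in place of the singular-value formula for the operator norm, and of trace cyclicity in place of the transpose trick in the first Frobenius variant, are cosmetic variations on the same routine argument.
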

\begin{proof}
 Recall that the operator 2-norm $\|\cdot\|_2$ is
$\|M\|_2 = \sup_{x\neq 0}{\|Mx\|_2/\|x\|_2} = \sqrt{\sigma_{\max}( M^\top M )} =   \sqrt{\sigma_{\max}( M M^\top )}=\|M^\top\|_2.$
where $\sigma_{\max}$ denotes the largest singular value. Hence, $\|PM\|_2=\sup_{x\neq 0}{\|PMx\|_2/\|x\|_2} =\sqrt{ \sigma_{\max} ( M^\top P^\top P M ) }= \sqrt{ \sigma_{\max}(M^\top M)}=\|M\|_2.$
as $P^\top P = I$. Using the fact that $\|M\|_2=\|M^\top\|_2$ for all $M\in \reals^{n\times n}$, as well as that $PP^\top = I$, we can show that $\|MP\|_2=\|P^\top M^\top\|_2=\|M^\top\|_2=\|M\|_2$.

The Frobenius norm is 
$\|M\|_F =  \sqrt{\trace(M^\top M)} = \sqrt{\trace(MM^\top)}=\|M^\top\|_F,$
hence
$\|PM\|_F = \sqrt{\trace(M^\top P^\top PM)}= \sqrt{\trace(M^\top M)} = \|M\|_F $
and, as in the case of the operator norm, we can similarly show $\|MP\|_F=\|P^\top M^\top\|_F=\|M^\top\|_F=\|M\|_F$. 
\end{proof}

In both norm cases, for $A,B \in \Symmetric^n$, we can
compute $d_S$ using a simple spectral decomposition.
Let $A = U \Sigma_A U^T$ and $B = V \Sigma_B V^T$ be the spectral decomposition of $A$ and $B$. As $A$ and $B$ are real and symmetric, we can assume
$U,V \in \Ortho^n$. Recall
that $U^{-1} = U^\top$ and $V^{-1} = V^\top$, while
$\Sigma_A$ and $\Sigma_B$ are diagonal and contain the eigenvalues of $A$ and $B$ sorted in increasing order; this orderning matters for computations below. 

The following theorem establishes that this decomposition readily yields the distance $d_S$, as well as the optimal orthogonal matrix $P^*$, when $\|\cdot\|=\|\cdot\|_F$: 
\begin{theorem}[\cite{umeyama1988eigendecomposition}]\label{th:fro_norm_d_S_comp}
$d_S(A,B) \triangleq \min_{P\in S}\|AP-PB\|_F=\|\Sigma_A - \Sigma_B\|_F$ and the minimum is attained by $P^* = UV^\top$.
\end{theorem}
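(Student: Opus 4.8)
The plan is to reduce the minimization over $\Ortho^n$ to a minimization over a single orthogonal matrix using the spectral decompositions, and then to solve that reduced problem explicitly. First I would use Lemma~\ref{eqlemma}: since $U,V\in\Ortho^n$, multiplying inside the Frobenius norm by $U^\top$ on the left and by $V$ on the right does not change the norm. Thus, for any $P\in\Ortho^n$,
\begin{align*}
\|AP-PB\|_F &= \|U^\top(AP-PB)V\|_F = \|U^\top A P V - U^\top P B V\|_F\\
&= \|\Sigma_A\, U^\top P V - U^\top P V\,\Sigma_B\|_F = \|\Sigma_A Q - Q\Sigma_B\|_F,
\end{align*}
where $Q \triangleq U^\top P V$. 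Because $U,V\in\Ortho^n$ and $\Ortho^n$ is a group, the map $P\mapsto U^\top P V$ is a bijection of $\Ortho^n$ onto itself, so $d_S(A,B) = \min_{Q\in\Ortho^n}\|\Sigma_A Q - Q\Sigma_B\|_F$.

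Next I would expand this reduced objective entrywise. Writing $\Sigma_A=\diag(\alpha_1,\dots,\alpha_n)$ and $\Sigma_B=\diag(\beta_1,\dots,\beta_n)$, the $(i,j)$ entry of $\Sigma_A Q - Q\Sigma_B$ is $(\alpha_i-\beta_j)Q_{ij}$, so
\begin{align*}
\|\Sigma_A Q - Q\Sigma_B\|_F^2 = \sum_{i,j}(\alpha_i-\beta_j)^2 Q_{ij}^2.
\end{align*}
Now I would lower-bound this. Since $Q\in\Ortho^n$, the matrix $M$ with entries $M_{ij}=Q_{ij}^2$ is doubly stochastic, so the quantity above is a linear function of $M\in\DStoch^n$; by the Birkhoff–von Neumann theorem its minimum over $\DStoch^n$ is attained at a permutation matrix, i.e. $\sum_{i,j}(\alpha_i-\beta_j)^2 Q_{ij}^2 \geq \min_{\pi}\sum_i (\alpha_i-\beta_{\pi(i)})^2$. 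It then remains to show that the optimal permutation $\pi$ is the identity — equivalently that $\sum_i(\alpha_i-\beta_{\pi(i)})^2$ is minimized when both sequences are sorted in the same (increasing) order. This is the classical rearrangement inequality: expanding the square, $\sum_i(\alpha_i-\beta_{\pi(i)})^2 = \sum_i\alpha_i^2 + \sum_i\beta_i^2 - 2\sum_i\alpha_i\beta_{\pi(i)}$, and $\sum_i\alpha_i\beta_{\pi(i)}$ is maximized when $\pi$ matches the sorted orders (proved by the standard exchange argument on any inversion). Hence $d_S(A,B)^2 \geq \sum_i(\alpha_i-\beta_i)^2 = \|\Sigma_A-\Sigma_B\|_F^2$.

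Finally I would exhibit a matching $P^*$ achieving the bound. Taking $Q=I$ gives $\|\Sigma_A I - I\Sigma_B\|_F = \|\Sigma_A-\Sigma_B\|_F$, and unwinding $Q=U^\top P V = I$ yields $P^* = UV^\top$, which lies in $\Ortho^n$ as a product of orthogonal matrices. Therefore the lower bound is tight and $d_S(A,B)=\|\Sigma_A-\Sigma_B\|_F$ with minimizer $P^*=UV^\top$. The main obstacle is the step identifying the optimal permutation: one must be careful that the eigenvalues in $\Sigma_A,\Sigma_B$ are sorted in the \emph{same} order (the excerpt fixes increasing order), and that the reduction to $\DStoch^n$ via $M_{ij}=Q_{ij}^2$ plus Birkhoff plus the rearrangement inequality is assembled correctly; everything else is routine manipulation justified by Lemma~\ref{eqlemma} and the group structure of $\Ortho^n$.
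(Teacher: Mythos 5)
Your proof is correct, and it takes a genuinely different route from the paper's for the key lower bound. The paper reduces to $\|\Sigma_A - \Delta\Sigma_B\Delta^\top\|$, invokes the Hoffman--Wielandt theorem \cite{hoffman1953variation} as a black box to get $d_S(A,B)\ge\|\Sigma_A-\Sigma_B\|_F$, and then handles the upper bound by a triangle-inequality manipulation that collapses when $\Delta=I$. You instead work with the equivalent form $\|\Sigma_A Q - Q\Sigma_B\|_F$, expand entrywise to $\sum_{i,j}(\alpha_i-\beta_j)^2 Q_{ij}^2$, observe that $M_{ij}=Q_{ij}^2$ is doubly stochastic, appeal to Birkhoff--von Neumann to reduce to a permutation, and finish with the rearrangement inequality --- in effect you re-derive Hoffman--Wielandt (in the Frobenius/Hermitian case) inside the proof rather than citing it. Both arguments are sound; the paper's is shorter given the external lemma, while yours is self-contained, more elementary, and makes the role of the increasing eigenvalue ordering explicit and transparent. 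One small point worth noting: your identification ``$M\in\DStoch^n$'' uses precisely that rows and columns of an orthogonal $Q$ have unit Euclidean norm, which is correct, but it is worth stating that not every doubly stochastic $M$ arises as $Q_{ij}^2$ for orthogonal $Q$ --- the argument only requires the inclusion, not the converse, so this is a one-sided lower bound, which is exactly what you need. Your upper-bound step (take $Q=I$, unwind to $P^*=UV^\top$) matches the paper's choice exactly.
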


\begin{proof}

The proof makes use of the following lemma by \cite{hoffman1953variation}.
\begin{lemma}\label{th:aux_lemma_eigen_values}
If $A$ and $B$ are Hermitian matrices
with eigenvalues $a_1 \leq a_2 \leq ...\leq a_n$ and $b_1 \leq b_2 \leq ...\leq b_n$ then 
\begin{equation}
\|A-B\|^2_F \geq {\sum^n_{i =1} (a_i - b_i)^2}
\end{equation}
\end{lemma}
\begin{rmk}
Note that if $\Sigma_A$ and $\Sigma_B$ are diagonal matrices
with the ordered eigenvalues of $A$ and $B$ in the diagonal, then Lemma \ref{th:aux_lemma_eigen_values} can be written as
$\|A-B\|_{F} \geq \| \Sigma_A - \Sigma_B\|_{F}$.
\end{rmk}
For any $P \in \Ortho^n$ and $\|\cdot\|=\|\cdot\|_F$ we have
\begin{align*}
\|AP-PB\|& = \|(A-PBP^{-1})P\| \stackrel{\text{Lem.}~\ref{eqlemma}}{=} 
\|A-PBP^\top\| \\&= \|U \Sigma_A U^\top-PV \Sigma_B V^\top P^\top \|\\
&=\|U (\Sigma_A -U^\top P V \Sigma_B V^\top P^\top U)U^\top \|\\
& \stackrel{\text{Lem.}~\ref{eqlemma}}{=} \| \Sigma_A -U^\top P V \Sigma_B V^\top P^\top U \|\\
& = \| \Sigma_A -\Delta \Sigma_B \Delta^\top\|
\end{align*}
where we define $\Delta \triangleq U^\top P V$.
As a product of orthogonal matrices, $\Delta \in \Ortho^n$.
Notice that
\begin{align*}
 \| \Sigma_A -\Delta \Sigma_B &\Delta^\top\| =\\&= \| \Sigma_A -\Delta \Sigma_A \Delta^\top + \Delta (\Sigma_B  -\Sigma_A )\Delta^\top\| \\
 &\leq \| \Sigma_A -\Delta \Sigma_A \Delta^\top\| + \|\Delta (\Sigma_B  -\Sigma_A )\Delta^\top\|\\
& \stackrel{\text{Lem.}~\ref{eqlemma}}{=} \|\Sigma_A -\Delta \Sigma_A \Delta^\top\| + \| \Sigma_B  -\Sigma_A \|.
\end{align*}
Therefore, for any $P \in \Ortho^n$,
$ \|\Sigma_A - \Sigma_B\|\leq d_S(A,B) \leq \|\Sigma_A -\Delta \Sigma_A \Delta^\top\| + \| \Sigma_B  -\Sigma_A \|,$
where the first inequality follows by Lemma  \ref{th:aux_lemma_eigen_values}
if we notice that $\|AP - PB\| = \|A - PBP^{-1}\|$ and that $PBP^{-1}$ and $B$ have the same spectrum for any $P$.
If we choose $P = U V^\top$ then $\Delta = I$ and the result follows.
\end{proof}

We can compute $d_S$ when $S= \Ortho^n$ and $\|\cdot\|$ is the operator norm in the exact same way.

\begin{theorem}\label{th:OP_norm_d_S_comp}
Let $\|\cdot\| = \|\cdot\|_{2}$ be the operator 2-norm.
Then, $d_S(A,B) \triangleq \min_{P\in S}\|AP-PB\|_2=\|\Sigma_A - \Sigma_B\|_2$ and the minimum is attained by $P^* = UV^\top$.
\end{theorem}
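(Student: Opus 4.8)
The plan is to reuse the proof of Theorem~\ref{th:fro_norm_d_S_comp} essentially verbatim, with a single substitution: wherever the Frobenius argument invokes the Hoffman--Wielandt inequality (Lemma~\ref{th:aux_lemma_eigen_values}), I would instead invoke Weyl's eigenvalue perturbation inequality, which is the natural operator-norm analogue. Concretely, first I would record the fact that for Hermitian matrices $M,N$ with eigenvalues sorted increasingly as $m_1\le\cdots\le m_n$ and $n_1\le\cdots\le n_n$, Weyl's inequality gives $|m_i-n_i|\le\|M-N\|_2$ for every $i$. Since $\Sigma_A-\Sigma_B$ is diagonal, its operator $2$-norm equals $\max_i|a_i-b_i|$, so this reads $\|M-N\|_2\ge\|\Sigma_A-\Sigma_B\|_2$ whenever $M,N$ are co-spectral with $A,B$ respectively. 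Everything else in Umeyama's argument uses only Lemma~\ref{eqlemma}, which is proved simultaneously for the Frobenius norm \emph{and} the operator $2$-norm, so those steps transfer without change.

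For the lower bound, take an arbitrary $P\in\Ortho^n$. By Lemma~\ref{eqlemma}, $\|AP-PB\|_2=\|(A-PBP^{-1})P\|_2=\|A-PBP^\top\|_2$, and $PBP^\top$ is Hermitian and co-spectral with $B$. Applying the Weyl bound above to the pair $A$ and $PBP^\top$ yields $\|AP-PB\|_2\ge\|\Sigma_A-\Sigma_B\|_2$, and taking the minimum over $P\in S=\Ortho^n$ gives $d_S(A,B)\ge\|\Sigma_A-\Sigma_B\|_2$. For the matching upper bound I would run the same chain of equalities as in the Frobenius proof—each link of which is an application of Lemma~\ref{eqlemma}—to get $\|AP-PB\|_2=\|\Sigma_A-\Delta\Sigma_B\Delta^\top\|_2$ where $\Delta\triangleq U^\top PV\in\Ortho^n$. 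Choosing $P^\ast=UV^\top$ makes $\Delta=U^\top U V^\top V=I$, hence $\|AP^\ast-P^\ast B\|_2=\|\Sigma_A-\Sigma_B\|_2$; combined with the lower bound this proves both $d_S(A,B)=\|\Sigma_A-\Sigma_B\|_2$ and the optimality of $P^\ast$. (Unlike the Frobenius write-up, no triangle-inequality detour is needed: once $\Delta=I$ the relevant identity is exact.)

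I do not expect a genuine obstacle here. The only non-routine point is identifying the correct spectral-perturbation inequality—Weyl, $\|M-N\|_2\ge\max_i|a_i-b_i|$, rather than Hoffman--Wielandt, $\|M-N\|_F^2\ge\sum_i(a_i-b_i)^2$—and, if one wants the appendix to be self-contained, either citing it (e.g., from \citet{horn2012matrix}) or deriving it in a line from the min-max characterization of eigenvalues. The other thing worth a sentence of care is checking that every equality marked by Lemma~\ref{eqlemma} in the Frobenius proof is still valid for $\|\cdot\|_2$, which it is because that lemma was stated and proved for both norms. So the proof I would write is a near-copy of Umeyama's, shortened by dropping the unnecessary triangle-inequality step.
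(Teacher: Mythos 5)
Your proposal is correct and takes essentially the same approach as the paper: the paper likewise reruns Umeyama's Frobenius argument with Hoffman--Wielandt replaced by the operator-norm Weyl bound $\|M-N\|_2 \ge \max_i |m_i-n_i|$ (which the paper derives via Weyl's inequalities for eigenvalues of Hermitian sums rather than citing it outright), using Lemma~\ref{eqlemma} for every equality and concluding with $P^* = UV^\top$ so that $\Delta = I$. Your observation that the triangle-inequality detour can be skipped once $\Delta = I$ is a minor cleanup, not a different route.
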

\begin{proof}
The proof follows the same steps as the proof of Theorem \ref{th:fro_norm_d_S_comp}, using  Lemma \ref{th:aux_lemma_eigen_values_op_norm} below instead of Lemma \ref{th:aux_lemma_eigen_values}.

\begin{lemma}\label{th:aux_lemma_eigen_values_op_norm}
If $A$ and $B$ are Hermitian matrices
with eigenvalues $a_1 \leq a_2 \leq ...\leq a_n$ and $b_1 \leq b_2 \leq ...\leq b_n$ then 
\begin{equation}
\|A-B\|_{2} \geq \max_{i} |a_i - b_i|.
\end{equation}
\end{lemma}
\begin{rmk}
Note that if $\Sigma_A$ and $\Sigma_B$ are diagonal matrices
with the ordered eigenvalues of $A$ and $B$ in the diagonal, then Lemma \ref{th:aux_lemma_eigen_values_op_norm} can be written as
$\|A-B\|_{2} \geq \| \Sigma_A - \Sigma_B\|_{2}$.
\end{rmk}
\begin{proof}[Proof of Lemma \ref{th:aux_lemma_eigen_values_op_norm}]
Let $\tilde{B} = -B$ have eigenvalues
$\tilde{b}_1 \leq \tilde{b}_2 \leq ...\leq \tilde{b}_n$ and let $C = A + \tilde{B}$ have eigenvalues $c_1 \leq c_2 \leq ...\leq c_n$.
We make use of the following lemma by Weyl \cite{horn2012matrix} to lower bound $c_n$.
\begin{lemma}\label{lem:weyl} If $X$ and $Y$ are Hermitian with eigenvalues $x_1 \leq...\leq x_n$ and $y_1 \leq ... \leq y_n$ 
and if $X+Y$ has eigenvalues
$w_1 \leq ... \leq w_n$ then
$x_{i-j+1} + y_j \leq w_i$ for all $i = 1,\ldots,n$ and $ j =1, \ldots , i$.
\end{lemma}

If we choose $X = \tilde{B}$, $Y = A$ and $i= n$ we get
 $a_{j} + \tilde{b}_{n+1-j} \leq c_n$ for all $j = 1, \ldots, n$.

Since $\tilde{b}_{n+1-j} = -{b}_{j}$  we get that $a_{j} -{b}_{ j} \leq c_n$, for any $j$.
Similarly, by exchanging the role of $A$ and $B$, we can lower bound the largest eigenvalue
of $B-A$, say $d_n$, by $b_j -a_{j}$ for any $j$.
Notice that, by definition of the operator norm and the fact that $A-B$ is
Hermitian,  $\|A-B\|_{2} \geq |c_n|$ and $\|B-A\|_{2} \geq |d_n|$. Since $\|B-A\|_{2} =\|A-B\|_{2}$ we have that $\|A-B\|_{2} \geq \max\{|c_n|,|d_n|\} \geq \max\{c_n,d_n\}\geq \max\{a_j - b_j,b_j - a_j\} = |a_j - b_j|$ for all $j$.
Taking the maximum over $j$ we get that $\|A-B\|_{2}  \geq \max_j |a_j - b_j|$, and the lemma follows.
\end{proof}
The proof of Thm.~\ref{th:OP_norm_d_S_comp} proceeds along the same steps as the above proof, using again the fact that, by Lemma~\ref{eqlemma},  $\|M\|_2 = \|MP\|_2 = \|PM\|_2$ for any $P \in \Ortho^n$ and any matrix $M$, along with Lemma~\ref{lem:weyl}.
\end{proof}

\section{The Weisfeiler-Lehman (WL) Algorithm.}\label{sec:WL}
 The WL algorithm \citep{weisfeiler1968reduction}  is a 
graph isomorphism heuristic. To gain some intuition on the algorithm, note that two isomorphic graphs must have the same degree distribution. More broadly, the distributions of  $k$-hop neighborhoods in the two graphs must also be identical.
Building on this, to test if two undirected, unweighted graphs are isomorphic, WL  \emph{colors} the nodes of a graph $G(V,E)$ iteratively. At iteration $0$, each node $ v\in V$ receives the same \emph{color} $c^0(v):=1$.
Colors at iteration $k+1\in\naturals$ are defined recursively via 
$c^{k+1}(v):= \mathsf{hash}\left(\mathsf{sort}\left( \textsf{clist}^k_v\right) \right)$where $\mathsf{hash}$ is a perfect hash function, and
$\textsf{clist}^{k}_v=[c^{k}(u):(u,v)\in E)]$
is a list containing the colors of all of $v$'s neighbors at iteration $k$. Intuitively, two nodes in $V$ share the same color after $k$ iterations  if their $k$-hop neighborhoods are isomorphic. WL terminates when the partition of $V$ induced by colors is stable from one iteration to the next.
This coloring extends to weighted directed graphs by appending weights and directions to colors in $\textsf{clist}_{v}^k$. After coloring two graphs $G_A,G_B$, WL declares a non-isomorphism if their color distributions differ. If not, then they \emph{may} be isomorphic and WL gives a set of \emph{constraints} on candidate isomorphisms: a permutation $P$ under which $AP=PB$  \text{must} map nodes in $G_A$ to nodes in $G_B$ of the same color.

\section{Algorithms and Implementation Details}\label{app:alg}
We outline here additional impementation details about the algorithms summarized in Table~\ref{table:comp}.
\begin{packeditemize}
\item \NetAlignBP, \IsoRank, \SparseIsoRank and \NetAlignMR, for which code is publicly available \citep{bayatticode}, are described by \cite{bayati2009algorithms}. \Natalie is described in \citep{el2015natalie}; code is again available \citep{nataliecode}. All five algorithms output $P \in \Permutations^n$.
\item The algorithm in \citep{lyzinski2016graph} outputs one $P \in \Permutations^n$ and one
$P' \in \DStoch^n$. We use $P \in \Permutations^n$ to compute $\|AP - PB\|_1$ and call this \InnerPerm. We use $P' \in \DStoch^n$ to compute $\|AP' - P'B\|_1$ and $\|AP' - P'B\|_2$ and call these algorithms \InnerDSLone and \InnerDSLtwo respectively. We use our own CVX-based projected gradient descent solver for the non-convex optimization problem the authors propose.
\item \DSLone and \DSLtwo denote $d_S(A,B)$ when $S \in \DStoch^n$ and $\|\cdot\|$   is $\|\cdot\|_1$ (element-wise) and $\|\cdot\|_F$, respectively. We implement them in Matlab (using CVX) as well as in C, aimed for medium size graphs and multi-core use. We also implemented a distributed version in Apache Spark~\citep{zaharia2010spark} that scales to very large graphs over multiple machines based on the Alternating Directions Method of Multipliers \citep{boyd2011distributed}. \item {\bf ORTHOP} and {\bf ORTHFR} denote
$d_S(A,B)$ when $S \in \Ortho^n$ and $\|\cdot\|$ is $\|\cdot\|_2$ (operator norm) and $\|\cdot\|_F$ respectively.
We compute them using an eigendecomposition
(\alt{See Appendix \ref{sec:umeyama} in Supplement}{See Appendix \ref{sec:umeyama}}).

\item For small graphs, we compute $d_{\Permutations^n}(A,B)$ using our brute-force GPU-based code.  For a single pair of graphs with $n \geq 15$ nodes, \EXACT already takes several days to finish. For $\|\cdot\| = \|\cdot\|_1$ in $d_S$ (element-wise or matrix norm), we have implemented the chemical distance as an integer value LP and solved it using branch-and-cut. It did not scale well for $n \geq 15$.

\item We implemented the WL algorithm over Spark to run, multithreaded, on a machine with 40 CPUs.  \end{packeditemize}
We use all public algorithms as black boxes with their default parameters, as provided by the authors.

\section{Graphs of Different Sizes}\label{app:size}

For simplicity, we described our framework for graphs of equal sizes.
However, we can extended in different ways to produce a metric
for graphs of different sizes. 
These extensions all start by extending two graphs, $G_A$ and $G_B$, with dummy nodes
such that the new graphs $G'_A$ and $G'_B$ have the same number of nodes. If $G_A$ has
$n_A$ nodes and $G_B$ has $n_B$ nodes we can, for example, add $n_B$ dummy nodes to $G_A$ and $n_A$
dummy nodes to $G_A$.
Once we have $G'_A$ and $G'_B$ of equal size, we can use the methods we already described to compute
a distance between $G'_A$ and $G'_B$ and return this distance as the distance between $G_A$ and $G_B$.

The different ways of extending the graphs differ in how the dummy nodes connect
to existing graph nodes, how dummy nodes connect to themselves, and what kind of penalty
we introduce for associating dummy nodes with existing graph nodes.
\emph{Method 1:} One way of extending the graphs is to add dummy nodes and leave them isolated, i.e.,
with no edges to either existing nodes or other dummy nodes. 
Although this might work when both graphs are dense,
it might lead to non desirable results when one of the graphs is sparse. For example, let $G_A$ be $3$ isolated nodes and
$G_B$ be the complete graph on $4$ nodes minus the edges
forming triangle $\{(1,2),(2,3),(3,1)\}$. Let us assume that $S = 
\Permutations^n$, such that, when we compute the distance between
$G_A$ and $G_B$, we produce an alignment between the graphs. One desirable outcome would be for $G_A$ to be aligned
with the three nodes in $G_B$ that have no edges among them.
This is basically solving the problem of finding a sparse subgraph inside a dense graph.
However, computing $d_S(A',B')$, where $A'$ and $B'$ are the extended adjacency matrices, could equally well align $G_A$
with the $3$ dummy node of $G'_B$. 
\emph{Method 2:} Add dummy nodes and connect each dummy node to all existing nodes and all other dummy nodes. This avoids the
issue described for method 1 but creates a similar non desirable situation: since the dummy nodes in each extended graph form a click, we might align $G_A$, or $G_B$, with just dummy nodes, instead of producing an alignment between existing nodes in $G_A$ and existing nodes in $G_B$. 
\emph{Method 3:} If both $G_A$ and $G_B$ are unweighted graphs, a method that avoids both issues above (aligning a sparse graph with isolated dummy nodes or aligning a dense graphs with clicks of dummy nodes) is to connect each dummy node to all existing nodes and all other dummy nodes with edges of weight $1/2$. This method works because, when $S = \Permutations^n$, it discourages alignments of pairs existing-existing nodes in $G_A$ with pairs  dummy-dummy nodes or pairs dummy-existing nodes in $G_B$, and vice versa.
\emph{Method 4:} One can also discourage aligning existing node with dummy nodes by introducing a linear term as in \eqref{addlocal}.

\end{document}